\newtheorem{theorem}{Theorem}[section]
\newtheorem{lemma}[theorem]{Lemma}
\newtheorem{conjecture}[theorem]{Conjecture}
\newtheorem{defi}[theorem]{Definition}
\newtheorem{prop}[theorem]{Proposition}
\renewcommand{\geq}{\geqslant}
\renewcommand{\leq}{\leqslant}
\renewcommand{\P}{\mathbb{P}}
\newcommand{\C}{\mathcal{C}}
\newcommand{\R}{\mathbb{R}}
\newcommand{\eps}{\varepsilon}
\newcommand{\EE}{\mathbb{E}}
\newcommand{\PP}{\mathbb{P}}
\renewcommand{\O}{\mathcal{O}}
\newcommand{\D}{\Delta}
\newcommand{\G}{\Gamma}
\newcommand{\g}{\gamma}
\newcommand{\CalP}{\mathcal{P}}
\renewcommand{\mod}{\mathrm{mod \,}}
\newcommand{\N}{\mathbb{N}}
\newcommand{\skel}{\mathrm{skel \,}}
\newcommand{\M}{\mathcal{M}}
\DeclareMathOperator{\E}{\mathbb{E}}
\begin{document}

\title[Longest $k$-monotone chains]{Longest $k$-monotone chains}

\author{Gergely Ambrus}

%\medskip\\
%\rm{Alfréd Rényi Institute of Mathematics}\\
%HUNGARIAN ACADEMY OF SCIENCES}
\address{
Alfr\'ed R\'enyi Institute of Mathematics}
\email[G. Ambrus]{ambrus@renyi.hu}

\date{\today}
\thanks{Research of the author was supported by NKFIH grants PD125502 and K116451 and by the Bolyai Research Scholarship of the Hungarian Academy of Sciences. }
%\keywords{}
%\subjclass[2010]{}

\maketitle

\begin{abstract}
We study higher order convexity properties of random point sets in the unit square. Given $n$ uniform i.i.d random points, we derive asymptotic estimates for the maximal number of them which are in $k$-monotone position, subject to mild boundary conditions. Besides determining the order of magnitude of the expectation, we also prove strong concentration estimates. We provide a general framework that includes the previously studied cases of $k=1$ (longest increasing sequences) and $k=2$ (longest convex chains).
\end{abstract}

\section{Higher order convexity}\label{sec_convex}

Let $X_n$ be a set of $n$ uniform, independent random points in the unit square $[0,1]^2$. It is a classical and well studied problem to determine the maximal number of points forming a {\em monotone increasing chain} in $X_n$, i.e. a set of points $p_1, \ldots, p_m$ in $X_n$ so that both the $x$-coordinates and the $y$-coordinates of $(p_i)_1^m$ form an increasing sequence. This is the geometric analogue of the famous question of {\em longest increasing subsequences} in random permutations, first mentioned in 1961 by Ulam \cite{U61}, which has been studied extensively ever since (see e.g. \cite{AD99, BDJ99, R14}). Let $L^1_n$ denote the maximum number of points of $X_n$ forming a monotone increasing chain. The order of magnitude of the expectation of $L^1_n$ was determined by Hammersley about half a century ago, with the exact value of the constant in the asymptotics determined five years later by Vershik and Kerov \cite{VK77}, and independently, by Logan and Shepp\cite{LS77}:

\begin{theorem}[\cite{H72}, \cite{LS77}, \cite{VK77}]\label{thm_hamm}
  As $n \rightarrow \infty$,
  \[
  \EE L^1_n \sim 2 n^{1/2}.
  \]
\end{theorem}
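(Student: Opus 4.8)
The plan is to combine a soft super/subadditivity argument, which yields both the correct order of magnitude and the existence of the limiting constant, with a substantially harder exactly-solvable analysis that pins the constant down to $2$. First I would pass to a Poissonized model: let $\CalP$ be a rate-one Poisson point process on the first quadrant, and for $t>0$ let $L(t)$ denote the maximal number of points of $\CalP \cap [0,t]^2$ lying on a single monotone increasing chain. Since $\CalP \cap [0,t]^2$ contains a Poisson$(t^2)$-distributed number of uniform points and $L$ is monotone under addition of points, controlling $\EE L(t)$ at $t=\sqrt n$ transfers back to $\EE L^1_n$ via a de-Poissonization step: the point count concentrates around $t^2$ with fluctuations of order $t$, which are negligible on the scale $t$ of the answer.

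Next I would establish superadditivity. Splitting $[0,s+t]^2$ into the corner box $[0,s]^2$ and the diagonally translated box $[s,s+t]^2$ and concatenating an optimal increasing chain from each (the restrictions of $\CalP$ to disjoint regions being independent) shows that $L(s+t)$ stochastically dominates a sum of independent copies $L'(s)+L''(t)$. Taking expectations yields $\EE L(s+t)\ge \EE L(s)+\EE L(t)$, so by Fekete's lemma $\EE L(t)/t \to c:=\sup_t \EE L(t)/t$. De-Poissonizing then gives $\EE L^1_n \sim c\sqrt n$, and it remains to prove $c=2$.

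A crude first-moment computation already bounds $c$ and shows finiteness. The expected number of increasing $m$-chains in $X_n$ is $\binom{n}{m}\tfrac{1}{m!}$, since an $m$-subset forms an increasing chain exactly when its $y$-order agrees with its $x$-order, an event of probability $1/m!$. Markov's inequality and Stirling give
\[
\PP(L^1_n\ge m)\ \le\ \frac{1}{m!}\binom{n}{m}\ \le\ \frac{n^m}{(m!)^2}\ \sim\ \frac{1}{2\pi m}\left(\frac{e^2 n}{m^2}\right)^{m},
\]
which tends to $0$ once $m\ge (1+\eps)\,e\sqrt n$. Hence $c\le e$. This pins the order of magnitude but not the sharp constant, and the matching second-moment lower bound is likewise too weak to reach $2$.

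The genuinely hard step, and the main obstacle, is to show $c=2$ exactly. Here I would invoke the exactly-solvable structure via the Robinson–Schensted–Knuth correspondence: relabelling the $N$ points of $\CalP\cap[0,t]^2$ by the relative ranks of their coordinates produces a uniformly random permutation in $S_N$, and $L(t)$ equals the length of its longest increasing subsequence, i.e.\ the length of the first row of the associated Young diagram. Conditionally on $N$, this shape follows the Plancherel measure $\PP(\lambda)=(f^\lambda)^2/N!$, where $f^\lambda$ counts standard Young tableaux. Applying the hook-length formula together with Stirling's approximation converts $\log\PP(\lambda)$ into a large-deviation rate functional, and solving the resulting constrained calculus-of-variations problem (maximizing an entropy-type integral subject to the area normalization) produces the Logan–Shepp/Vershik–Kerov limit curve; its geometry forces the first row of a Plancherel-typical diagram of size $N$ to have length asymptotic to $2\sqrt N$. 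This identifies $c=2$, and combined with the concentration furnished by the superadditive structure it completes the proof. Everything preceding the variational analysis is soft; the determination of the constant is where the real work lies.
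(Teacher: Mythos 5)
The paper does not actually prove Theorem~\ref{thm_hamm}: it is quoted as a classical result with references to Hammersley, Logan--Shepp and Vershik--Kerov, so there is no internal proof to compare against. Your outline reproduces the standard historical route: Poissonization plus Hammersley's superadditivity argument and Fekete's lemma give the existence of the limit constant $c$; the first-moment count $\binom{n}{m}/m!$ gives $c\le e$ and in particular finiteness; and the identification $c=2$ is delegated to the RSK correspondence, the Plancherel measure, the hook-length formula, and the Logan--Shepp/Vershik--Kerov variational problem. This is correct as a roadmap, and it is essentially the same superadditive framework that the paper later generalizes to $k$-monotone chains (Kingman's subadditive ergodic theorem replacing Fekete's lemma, and Lemma~\ref{lemma_expsmall} replacing the exact $1/m!$ count). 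Two caveats are worth flagging. First, the de-Poissonization should be justified by monotonicity of $L$ under addition of points together with $\sqrt{n+O(\sqrt n)}=\sqrt n+O(1)$, rather than by asserting that point-count fluctuations of order $t$ are ``negligible on the scale $t$'' --- as stated they are of the same order as the answer, and it is only after passing through the square-root scaling that their effect becomes $O(1)$. Second, the limit-shape theorem by itself yields only $\liminf \lambda_1/\sqrt N\ge 2$ for the first row of a Plancherel-typical diagram; the matching upper bound $\limsup \lambda_1/\sqrt N\le 2$ requires the additional estimate supplied by Vershik and Kerov, which your sketch glosses over. Neither caveat invalidates the approach, but the variational step --- which, as you correctly identify, is where all the real work lies --- is only described, not carried out, so what you have is a faithful outline of the known proof rather than a self-contained argument.
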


This result serves as the starting point for the current research. We are going to study point sets which satisfy a more general monotonicity criteria. We start off with a basic concept.

\begin{defi}
  A set of points $p_1, \ldots, p_m$ is a {\em chain} if their $x$-coordinates form a monotone increasing sequence. The {\em length} of the chain is the cardinality of the point set, that is, $m$.
\end{defi}
Next, one may study points of the random sample $X_n$ forming a {\em convex} chain. The motivation is two-fold. On the one hand, convex polygons with vertices among a random sample have been studied extensively in the last 50 years (see e.g. the excellent survey of Bárány~\cite{B08} or the monograph of Schneider and Weil~\cite{SW08}). On the other hand, given how fruitful and far-reaching the research of the monotone increasing subsequences has been, it is a natural attempt to transfer the results to the convex analogue.

The first steps in that direction were took in our joint paper with I. Bárány~\cite{AB09}, where we studied the order of magnitude of the maximal number of points of $X_n$ forming a {\em convex chain} together with $(0,0)$ and $(1,1)$, that is, a chain whose points are in convex position. Let $L^2_n$ denote the maximal number of points of $X_n$ in a convex chain lying under the diagonal $y=x$.
\begin{theorem}[\cite{AB09}] \label{thm_conv}
There exists a positive constant $\alpha_2$, so that as $n \rightarrow \infty$,
\[
\EE L^2_n \sim \alpha_2 n^{1/3}.
\]
\end{theorem}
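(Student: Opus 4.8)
The plan is to adapt the Hammersley strategy behind Theorem~\ref{thm_hamm} to the second-order setting: first fix the order of magnitude by matching first- and second-moment bounds, and then promote the two-sided bound to a genuine asymptotic by establishing the \emph{existence} of the limiting constant through a self-similarity (subadditivity) argument. The exponent $1/3$ is best understood as the probabilistic avatar of Andrews' theorem on convex lattice polygons: a convex polygon inscribed in an $N\times N$ box has $O(N^{2/3})$ vertices because its edge vectors must point in distinct directions, so that $N$ of them cost total length $\gtrsim N^{3/2}$, which must fit inside a perimeter of order $N$. Rescaling $n$ uniform points to unit density in a region of area $n$ turns the length budget $\sqrt{\,\text{area}\,}$ into the vertex bound $n^{1/3}$, and the sharp profile is dictated by the \emph{affine arc length} $\int(\g'')^{1/3}\,dx$ of the limiting curve. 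Accordingly $\alpha_2$ will arise as a limit rather than as an explicit number, which is all that the statement requires.

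For the \textbf{upper bound} I would run a first-moment computation. Let $N_m$ count the $m$-element subsets of $X_n$ that, listed in increasing order of $x$-coordinate, form a convex chain inside the lower triangle $T=\{(x,y):0\le y\le x\le 1\}$; then $\PP(L^2_n\ge m)\le\EE N_m=\binom{n}{m}\,q_m$, where $q_m$ is the probability that $m$ i.i.d.\ uniform points form such a chain. The technical heart is the estimate $q_m\le C^m/(m!)^2$. I would obtain it from the edge-vector encoding of a chain, namely a sequence of vectors with positive $x$-component, strictly increasing slopes, and bounded total displacement: the slope-ordering forces a unique admissible permutation of the edges (one factorial), while the geometric displacement and in-box constraints contribute a further factorial, exactly as in Andrews' length budget. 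Establishing the correct power, equivalently the exponent $1/3$, is the delicate point. Granting it, $\binom{n}{m}q_m\approx (Cn)^m/(m!)^3$, which tends to $0$ as soon as $m\ge C'n^{1/3}$; summing the tail over $m$ then gives $\EE L^2_n\le C''n^{1/3}$.

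For the \textbf{lower bound} I would argue by explicit construction. Fix a smooth, strictly convex arc $\g$ from $(0,0)$ to $(1,1)$ lying below the diagonal, and place along it pairwise disjoint thin windows tangent to $\g$. To guarantee that selecting \emph{any} one point from each window yields a genuinely convex chain, a window of $x$-width $w$ must have height $h\lesssim\kappa w^2$, where $\kappa$ is the curvature, so that the slope increment across a window dominates the slope uncertainty within it; requiring area $wh\sim 1/n$ then forces $w\sim(\kappa n)^{-1/3}$ and produces $\sim n^{1/3}\int\kappa^{1/3}\,dx$ windows, the affine length of $\g$ again governing the count. Each window contains a point of $X_n$ with probability bounded away from $0$, and since any selection of one point per window is automatically convex, $L^2_n$ is at least the number of non-empty windows; this count concentrates around its mean of order $n^{1/3}$ by a standard argument, giving $\EE L^2_n\ge c\,n^{1/3}$.

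Finally, to upgrade the matching bounds to an asymptotic I would establish $\lim_n \EE L^2_n/n^{1/3}=\alpha_2$ by subadditivity. After Poissonising (replacing $X_n$ by a Poisson process of intensity $\lambda$ in $T$) and writing $g(\lambda)$ for the expected longest convex chain, the affine invariance of convexity together with the scaling of Poisson intensity under linear maps sets up an approximately superadditive relation obtained by subdividing $T$ and concatenating sub-chains; Fekete's lemma then forces $g(\lambda)/\lambda^{1/3}\to\alpha_2\in(0,\infty)$, and de-Poissonisation (concentration of the Poisson count together with monotonicity of $L^2_n$ in $n$) transfers this to $\EE L^2_n\sim\alpha_2 n^{1/3}$. \emph{The main obstacle is precisely this concatenation step}: convexity is a global constraint, so two convex chains glue into a convex chain only when the slope at the end of the first does not exceed the slope at the start of the second. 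I would control this by subdividing $T$ into a slowly growing number of pieces and absorbing the slope-mismatch defect into a lower-order error term, so that the superadditive inequality holds up to $o(\lambda^{1/3})$, which is enough for Fekete's lemma to deliver the constant.
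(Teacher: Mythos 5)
Your overall strategy coincides with the one used in \cite{AB09} (and with the general-$k$ framework of this paper specialized to $k=2$): a first-moment upper bound via the probability that $m$ uniform points form a convex chain, a lower bound by packing order-$n^{1/3}$ disjoint ``windows'' along a convex curve, and Poissonization plus superadditivity to produce the constant $\alpha_2$. Two remarks on where the details diverge. First, the estimate $q_m\le C^m/(m!)^2$ that you flag as the technical heart need not be re-derived: it follows from the exact formula of B\'ar\'any--Rote--Steiger--Zhang \cite{BRSZ00} (quoted in the paper just before Lemma~\ref{lemma_expsmall}), built on Valtr's rearrangement \cite{V95}; so the upper bound really is easy for $k=2$, and it is only for $k\ge 3$ that this computation breaks down and the skeleton argument of Lemma~\ref{lemma_expsmall} becomes necessary. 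Second, and more importantly, the concatenation obstacle you identify at the end is real, but your proposed cure --- an \emph{approximate} superadditive inequality with an $o(\lambda^{1/3})$ defect --- is the one step that does not obviously work. Repairing a slope mismatch at a junction by deleting points can in principle cost a constant fraction of one of the chains: the edge slopes of a convex chain increase monotonically, so the longest chain in the left piece may spend essentially all of its points at slopes exceeding the initial slope of the longest chain in the right piece, and you give no argument bounding the loss (bounding it a priori essentially requires the limit-shape information you are trying to establish). The paper avoids this entirely by changing the object that is superadditive: $L^2(a,b)$ counts chains that are convex \emph{together with the doubled endpoints} $\gamma_2(a)^{\circ 2}$ and $\gamma_2(b)^{\circ 2}$ on the parabola, i.e.\ the tangency condition at each junction is built into the boundary condition (Definition~\ref{def_mkpi} and the cells of Definition~\ref{def_ckab}), so that Lemma~\ref{lemma_concatenate} gives \emph{exact} superadditivity and Kingman's theorem (Theorem~\ref{thm_kingman}) applies directly, yielding in addition almost sure convergence. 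If you adopt that device, your plan closes; as written, the Fekete step is a gap rather than a lower-order error.
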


We also proved a limit shape result for the longest convex chains and established upper and lower estimates for $\alpha_2$. Alternative proofs to some the results are given in  \cite{A09} and \cite{A17}.

The goal of the present paper is to study the analogous questions for higher order convexity, and to describe a unified framework to the above results. Note that the properties studied above are equivalent to non-negativity of the first (monotone increasing property) and second (convexity property) `` discrete derivatives'' of the chains. Therefore, it is natural to define higher order convexity along this scheme. Eli\'as and Matou\v{s}ek introduced the following concept in order to establish Erd\H{o}s-Szekeres type results:

\begin{defi}[Eli\'as and Matou\v{s}ek, \cite{EM13}]\label{def_positive}
The $(k+1)$-tuple $(p_1, \ldots, p_{k+1})$ of distinct points in the plane is called {\em positive}, if it lies on the graph of a function whose $k$-th derivative exists everywhere, and is nowhere negative.
 The points $(p_1, \ldots, p_m)$ in the plane form a {\em $k$-monotone chain} if their $x$-coordinates are  monotone increasing, and every $(k+1)$-tuple of them is positive.
\end{defi}

Note that in the present paper, ``monotone'' will always refer to {\em monotone increasing}. It would be an alternative to use the term ``$k$-convex''. However, there are already various other concepts existing by that name, thus we stick to ``$k$-monotone''.

A second, important remark points out the difference between cases  of $k=1,2$, and larger values of $k$. In the above definition, positivity of different $(k+1)$-tuples may be demonstrated by different functions. For $k=1,2$, there exists a single monotone/convex function containing all the points on its graph. The same property was conjectured to hold also for larger values of $k$ by Eliá\v{s} and Matou\v{s}ek \cite{EM13}. However, Rote found a counterexample for $k=3$  \cite{EM13}.

An alternative but equivalent definition may be given, see  Corollary 2.3 of \cite{EM13}: a $(k+1)$-tuple is positive iff its {\em $k$th divided difference} is nonnegative, where divided differences are defined as follows. Assume $p_1, \ldots, p_n$ are points in the plane of the form $p_i = (x_i, y_i)$ (note that here, the $x$-coordinates do not necessarily form an increasing sequence). The $j$th (forward) divided difference $\Delta_j(p_i, \ldots, p_{i + j +1})$ of the $(j+1)$-tuple $p_i, \ldots, p_{i + j }$ is defined recursively by
\begin{align}
\D_0(p_i) &:= y_i \notag\\
\D_j(p_i, \ldots, p_{i + j }) &:= \frac{\D_{j-1}(p_{i+1}, \ldots, p_{i + j }) - \D_{j-1}(p_i, \ldots, p_{i + j -1})}{x_{i + j } - x_i} \label{diffdef}
\end{align}
for every $0 \leq i \leq n - j $. Note that divided differences (and, hence, positivity of a $(k+1)$-tuple) are invariant under permutations.

Divided differences are used in polynomial approximation; in particular, they provide the coefficients for the summands of Newton's interpolating polynomial:

\begin{lemma}[Newton interpolating polynomial]\label{newton_interpolation}
Let $p_1, \ldots, p_{k+1}$ be points in the plane with distinct $x$-coordinates. Assume that $p_i = (x_i, y_i)$. The unique polynomial $P(x)$ of degree $k$ whose graph contains all the points $p_i$ for $i = 1, \ldots, k+1$ may be expressed as
\begin{equation}\label{newtonpoly}
  P(x)  = \sum_{j = 0}^ k \D_j(p_1, \ldots, p_{j+1}) \prod_{i = 1}^{j} (x - x_i) \,.
\end{equation}
\end{lemma}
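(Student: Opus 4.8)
The plan is to split the argument into three parts: existence and uniqueness of the interpolant, the identification of the $k$th divided difference with its leading coefficient, and a final degree count matching Newton's expression \eqref{newtonpoly} to that interpolant. First I would record existence and uniqueness. The interpolation conditions $P(x_i) = y_i$ for $i = 1, \ldots, k+1$ form a linear system in the $k+1$ coefficients of $P$, whose matrix is the Vandermonde matrix of $x_1, \ldots, x_{k+1}$. Since the nodes are pairwise distinct, its determinant $\prod_{i < j}(x_j - x_i)$ is nonzero, so the system has a unique solution; call the resulting polynomial $Q$. The same reasoning applies to every subfamily, so for $1 \le a \le b \le k+1$ I may write $P[a,\ldots,b]$ for the unique polynomial of degree $\le b - a$ through $p_a, \ldots, p_b$, and let $c[a,\ldots,b]$ denote its coefficient of $x^{b-a}$.

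The crux is to prove that $c[a,\ldots,b] = \D_{b-a}(p_a, \ldots, p_b)$, whence the case $Q = P[1,\ldots,k+1]$ gives that the leading coefficient of $Q$ equals $\D_k(p_1, \ldots, p_{k+1})$. For this I would verify that the interpolants satisfy the Aitken--Neville recursion
\[
P[a,\ldots,b](x) = \frac{(x - x_a)\, P[a+1,\ldots,b](x) - (x - x_b)\, P[a,\ldots,b-1](x)}{x_b - x_a}.
\]
Indeed, the right-hand side has degree $\le b - a$ and evaluates to $y_\ell$ at each node $x_\ell$ with $a \le \ell \le b$: at the two endpoints because one of the linear factors vanishes, and at the interior nodes because both interpolants already return $y_\ell$. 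By uniqueness it therefore coincides with the left-hand side. Comparing the coefficients of $x^{b-a}$ on both sides yields
\[
c[a,\ldots,b] = \frac{c[a+1,\ldots,b] - c[a,\ldots,b-1]}{x_b - x_a},
\]
which is exactly the defining recursion \eqref{diffdef} of the divided differences, with base case $c[a] = y_a = \D_0(p_a)$. Induction on $b - a$ then gives the asserted identity.

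Finally I would deduce \eqref{newtonpoly} by induction on $k$, the base case $k = 0$ being immediate. For the inductive step, by the induction hypothesis the Newton expression for $p_1, \ldots, p_k$ equals $P[1,\ldots,k]$, so the right-hand side $P(x)$ of \eqref{newtonpoly} satisfies $P(x) = P[1,\ldots,k](x) + \D_k(p_1,\ldots,p_{k+1}) \prod_{i=1}^{k}(x - x_i)$. The product vanishes at $x_1, \ldots, x_k$, so $P$ and $Q$ agree at those $k$ nodes; moreover both have the same coefficient of $x^k$, namely $\D_k(p_1,\ldots,p_{k+1})$, by the identity just proved. Hence $Q - P$ has degree at most $k - 1$ yet vanishes at the $k$ distinct points $x_1, \ldots, x_k$, forcing $Q - P \equiv 0$ and thus $P = Q$. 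I expect the main obstacle to be the middle step: recognizing that the recursive definition \eqref{diffdef} of divided differences is nothing other than the recursion satisfied by the leading coefficients of the interpolants over nested blocks of nodes. The remaining ingredients---Vandermonde nondegeneracy and the closing degree count---are routine.
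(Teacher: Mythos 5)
Your proof is correct and complete: the Vandermonde argument for existence and uniqueness, the Aitken--Neville recursion identifying $\D_{b-a}(p_a,\ldots,p_b)$ with the leading coefficient of the interpolant through $p_a,\ldots,p_b$, and the closing degree count are all carried out properly and match the recursive definition \eqref{diffdef} used in the paper. The paper itself states Lemma~\ref{newton_interpolation} without proof, treating it as classical (it is standard material, cf.\ \cite{Phi03}), and your argument is precisely the standard one, so there is nothing to reconcile.
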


Divided differences are also related to higher order derivatives by the following generalization of the mean value theorem (see \cite{Phi03}, Eq. 1.33):

\begin{lemma}[Cauchy]\label{lemma_cauchy}
  Assume that the points $p_1, \ldots, p_{k+1}$ have increasing $x$-coordinates $a := x_1 <\ldots < x_{k+1} =: b$, and they lie on the graph of a function $f$ which is $k$ times differentiable everywhere on the interval $[a,b]$.
  Then there exists $\xi \in (a,b)$ so that
  \begin{equation}\label{cauchyformula}
     \D_{k} (p_1, \ldots, p_{k+1}) = \frac {f^{(k)}(\xi)}{k!} \,.
  \end{equation}
\end{lemma}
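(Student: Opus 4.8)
The plan is to compare $f$ with the Newton interpolating polynomial through the $k+1$ given points and then apply Rolle's theorem $k$ times. First I would invoke Lemma~\ref{newton_interpolation} to write the unique degree-$k$ polynomial $P$ whose graph contains $p_1, \ldots, p_{k+1}$ in the Newton form \eqref{newtonpoly}. The only summand of degree $k$ in $x$ is the one indexed by $j=k$, namely $\Delta_k(p_1,\ldots,p_{k+1}) \prod_{i=1}^{k}(x-x_i)$, whose coefficient of $x^k$ is $\Delta_k(p_1,\ldots,p_{k+1})$; all other summands have degree at most $k-1$. Hence $P$ has leading coefficient $\Delta_k(p_1,\ldots,p_{k+1})$, and since $\deg P = k$ its $k$-th derivative is the constant
\[
P^{(k)}(x) = k! \, \Delta_k(p_1, \ldots, p_{k+1}).
\]

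Next I would set $g := f - P$ on $[a,b]$. Because both $f$ and $P$ pass through each point $p_i = (x_i, y_i)$, the function $g$ vanishes at the $k+1$ distinct abscissae $x_1 < \ldots < x_{k+1}$. Moreover $g$ is $k$ times differentiable on $[a,b]$, since $f$ is so by hypothesis and $P$ is a polynomial.

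The core step is the iterated use of Rolle's theorem. Since $g$ has $k+1$ zeros in $[a,b]$, its derivative $g'$ has at least $k$ zeros in the open interval $(a,b)$, one strictly between each consecutive pair of zeros of $g$; repeating this, $g^{(j)}$ has at least $k+1-j$ zeros in $(a,b)$ for every $0 \le j \le k$. At the final step $j=k$ this produces a point $\xi \in (a,b)$ with $g^{(k)}(\xi) = 0$, that is, $f^{(k)}(\xi) = P^{(k)}(\xi) = k!\,\Delta_k(p_1,\ldots,p_{k+1})$, which rearranges to \eqref{cauchyformula}. The only point requiring care is the bookkeeping in the Rolle iteration: at each stage one must know that $g^{(j)}$ is differentiable so that Rolle applies to yield zeros of $g^{(j+1)}$, which is guaranteed for all $j \le k-1$ by the assumed $k$-fold differentiability of $f$ (each such $g^{(j)}$ is differentiable, hence continuous); and one must note that the zeros stay in the \emph{open} interval, so no differentiability at the endpoints is needed. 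There is no genuine obstacle here, this being the classical Cauchy argument, so the remaining work is simply to state the zero-counting cleanly.
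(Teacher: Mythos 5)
Your proof is correct and complete: the Newton form gives $P^{(k)} \equiv k!\,\Delta_k(p_1,\ldots,p_{k+1})$, and the iterated Rolle argument applied to $g = f - P$ produces the required $\xi \in (a,b)$. The paper does not prove this lemma at all — it is quoted from \cite{Phi03} (Eq.\ 1.33) — and your argument is precisely the standard one found there, so there is nothing to reconcile; the bookkeeping you flag (zeros landing in the open interval, differentiability of $g^{(j)}$ for $j \le k-1$ following from the hypothesis that $f^{(k)}$ exists everywhere on $[a,b]$) is handled correctly.
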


Next, we extend the definition of divided differences to multisets of points. For a point $p$, introduce the notation
\[
p^{\circ i} = \{ \underbrace{p, \ldots, p} _ {i} \},
\]
that is, the multiset of $p$ with multiplicity $i$. Assume $p = (x, f(x))$ is a point on the graph of a function $f$, which is $k$ times differentiable at $x$. In accordance with \eqref{cauchyformula}, we define the $i$th divided difference of $p^{\circ (i+1)} $ with respect to $f$ by
\begin{equation}\label{def_dmultset}
 \D_i(p^{\circ ( i+1)} ;f):= \frac{f^{(i)}(x)}{i!}
\end{equation}
for every $i \leq k$. Note that this agrees with the limit of $\D_i(\tilde{p}_1, \ldots, \tilde{p}_{i+1})$ as $\tilde{p}_1, \ldots, \tilde{p}_{i+1}$ converge to $p$ along the graph of~$f$.

By repeatedly applying \eqref{diffdef}, we may define divided differences up to order $k$ with respect to a function $f$ of any multiset of points lying on the graph of a $k$-times differentiable function $f$. Therefore, we may extend the $k$-monotonicity property to multisets of points with respect to $f$, provided that all points of multiplicity larger than 1 lie on the graph of~$f$.

From now on, we assume that all $(k+1)$-tuples of $X_n$ are in {\em $k$-general position}, that is, they do not lie on the graph of a polynomial of degree at most $k-1$. This property holds with probability 1.

Under this assumption, positivity of $(k+1)$-tuples is a {\em transitive} property:
\begin{lemma}[\cite{EM13}, Lemma 2.5]\label{lemma_transitive}
Assume the distinct points $p_1, \ldots,  p_{k+2}$ form a chain, they are in $k$-general position, and that both $(k+1)$-tuples $(p_1, \ldots, p_{k+1})$ and $(p_2, \ldots, p_{k+2})$ are positive. Then any $(k+1)$-element subset of $(p_1, \ldots,  p_{k+2})$ is positive.
\end{lemma}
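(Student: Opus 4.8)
The plan is to reduce everything to a single algebraic identity: I will show that the $k$-th divided difference of a $(k+1)$-subset, regarded as a function of the abscissa of the one omitted point, is \emph{affine}. Positivity at the two extreme subsets then propagates to all the intermediate ones by elementary monotonicity.

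First I would fix notation. Since the points form a chain, order them so that $x_1 < \cdots < x_{k+2}$, and for $1 \le j \le k+2$ let $S_j$ be the $(k+1)$-tuple obtained by deleting $p_j$. By the divided-difference characterisation of positivity, $S_j$ is positive exactly when $\D_k(S_j) \ge 0$. The two hypotheses are precisely $\D_k(S_{k+2}) \ge 0$ (for $(p_1,\ldots,p_{k+1})$) and $\D_k(S_1) \ge 0$ (for $(p_2,\ldots,p_{k+2})$), and the goal is to establish $\D_k(S_j) \ge 0$ for the remaining indices $2 \le j \le k+1$.

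The key step is the identity. Let $P$ be the interpolating polynomial of degree $\le k+1$ through all $k+2$ points, let $P_j$ be the interpolating polynomial of degree $\le k$ through $S_j$, and set $D := \D_{k+1}(p_1,\ldots,p_{k+2})$. Applying Lemma~\ref{newton_interpolation} to the full point set with $p_j$ listed last separates off the top Newton term and gives
\[
  P(x) = P_j(x) + D \prod_{i \ne j}(x - x_i).
\]
Now differentiate $k$ times. Here $P_j^{(k)}(x) = k!\,\D_k(S_j)$ is constant, while writing $\prod_{i\ne j}(x-x_i) = x^{k+1} - (\Sigma - x_j)\,x^{k} + \cdots$ with $\Sigma := \sum_{i=1}^{k+2} x_i$ shows that the only $j$-dependence surviving in $P^{(k)}$ comes from the $x^k$-coefficient. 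Matching the coefficient of $x$ in $P^{(k)}(x)$ forces it to equal $D\,(k+1)!$ independently of $j$ (a consistency check), and matching the constant terms yields
\[
  \D_k(S_j) = A - D\,x_j ,
\]
where $A$ is a constant that does not depend on $j$. Thus $\D_k(S_j)$ is an affine function of $x_j$.

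The conclusion is then immediate: an affine function of $x_j$ that is nonnegative at the two endpoints $x_j = x_1$ and $x_j = x_{k+2}$ is nonnegative on the entire segment $[x_1, x_{k+2}]$, since an affine function attains its minimum over an interval at an endpoint (the case $D=0$ being the trivial constant one). As $x_2,\ldots,x_{k+1}$ all lie in this interval, every $\D_k(S_j) \ge 0$, so every $(k+1)$-subset is positive. I expect the only real work to be verifying the displayed identity and carrying out the $k$-th derivative cleanly; once the affine dependence $\D_k(S_j) = A - D\,x_j$ is in hand, the rest is one line. The $k$-general position hypothesis is not needed for the inequality itself — it only ensures the divided differences are nonzero, upgrading positivity to strict positivity.
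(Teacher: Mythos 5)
Your argument is correct. Note first that the paper does not actually prove this lemma --- it is imported verbatim from \cite{EM13} (their Lemma 2.5) as a black box --- so there is no in-paper proof to compare against; what you have written is a self-contained derivation using only tools the paper already supplies. Concretely: applying Lemma~\ref{newton_interpolation} (with $k+1$ in place of $k$) to the $k+2$ points listed with $p_j$ last gives $P = P_j + \D_{k+1}(p_1,\ldots,p_{k+2})\prod_{i\ne j}(x-x_i)$, where the permutation invariance of divided differences (stated explicitly in Section~\ref{sec_convex}) identifies the top coefficient with $D$ independently of $j$; differentiating $k$ times correctly isolates the only $j$-dependence in the $x^k$-coefficient $-(\Sigma-x_j)$ of the product, and matching constant terms gives $\D_k(S_j)=A-Dx_j$. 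This is equivalent to the standard node-deletion identity $\D_{k+1}(p_1,\ldots,p_{k+2})=\bigl(\D_k(S_i)-\D_k(S_j)\bigr)/(x_j-x_i)$, and your identification of the hypotheses as nonnegativity at the two extreme abscissae $x_1$ and $x_{k+2}$ is exactly right, so the affine interpolation closes the argument. Your closing remark is also accurate: the $k$-general position assumption is not needed for the weak inequality $\D_k(S_j)\ge 0$; its role in \cite{EM13} and in this paper is to make the positive/negative two-coloring well defined (no vanishing divided differences), which is what licenses the point-deletion step in the proof of Lemma~\ref{lemma_concatenate}.
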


In other words, the 2-coloring of $(k+1)$-tuples given by positivity/non-positivity is a transitive coloring (defined in \cite{EM13} and  \cite{FPSS12}). This will prove to be crucial in the subsequent arguments. In particular, it implies that in order to check $k$-monotonicity of a chain, it suffices to check positivity of all of its intervals (i.e. sets of consecutive points) of length $k+1$.

\section{Results} \label{sec_results}

Our goal is to determine the order of magnitude of the maximum number of points in a uniform random sample from the unit square which form a $k$-monotone chain.  For technical reasons, we also impose boundary conditions on the chain -- these conditions will ensure that a $k$-monotone chain is also $l$-monotone for every $l \leq k$. In the case $k=1$, the boundary condition simply requires the monotone chain  to start at $(0,0)$ and finish at $(1,1)$. For convex chains, the points are required to lie in the triangle below the graph of $y = x$. For general $k$, we introduce the curve
\[
\G_k = (x, x^k) , \ x \geq 0
\]
and for every $x \geq 0$, let
\begin{equation}\label{gammakdef}
 \g_k(x) = (x, x^k) \in \G_k.
\end{equation}
For the point $\gamma_k(x)$, we write
\[
\D_i(\gamma_k(x)^{\circ ( i+1)}) := \D_i(\gamma_k(x)^{\circ ( i+1)} ; x^k)= k (k-1) \ldots(k - i + 1) x^{k-i} ,
\]
that is, we consider $k$-monotonicity with respect to $f(x) = x^k$. Note that for $x = 0$ and $x=1$, $\gamma_k(x)$ is the same for every $k$; however, ambiguity is avoided by specifying the value of $k$.

The setup is the following. Fix $k \geq 1$. For any $n \geq 1$, as set before, let $X_n$ be a set of $n$ i.i.d. uniform random points in the unit square $[0,1]^2$.

\begin{defi}\label{def_mkxn}
  Let $\M^k(X_n)$ be the set of all chains $(p_1, \ldots, p_m)$  of $X_n$ so that
\begin{equation}\label{multsetchain}
(\gamma_k(0)^{\circ k}, p_1, p_2, \ldots, p_m, \gamma_k(1)^{\circ k})
\end{equation}
is a $k$-monotone chain. Furthermore, let $L^k(X_n) =:  L^k_n$ denote the maximal cardinality of elements of $\M^k(X_n)$.
\end{defi}

Thus, $L^k_n$ is a random variable defined on the space of $n$-element i.i.d. uniform samples from the square. Note that the definition of divided differences and the boundary condition at $\gamma_k(0)$ implies that up to order $k$, the $j$th divided differences of the consecutive $(j+1)$-tuples of the chain form a monotone increasing sequence, starting at 0. Therefore,
\begin{equation}\label{mkmonotone}
  \M^k(X_n) \subset \M^j(X_n)
\end{equation}
for every $j \leq k$.

We first extend Hammersley's result \cite{H72} by generalizing Theorem~\ref{thm_hamm} and Theorem~\ref{thm_conv}.

\begin{theorem}\label{thm_expectation}
For any $k \geq 1$  there exists a positive constant $\alpha_k$ so that
\[
\lim_{n \rightarrow \infty}  n^{- \frac 1 {k+1}} \, \EE L^k_n = \alpha_k \,.
\]
Furthermore, $ n^{- \frac 1 {k+1}} \, L^k(X_n) \rightarrow \alpha_k$ almost surely, as $n \rightarrow \infty$.
\end{theorem}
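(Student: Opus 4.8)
The plan is to prove the theorem in two stages: first establish that $\EE L^k_n \asymp n^{1/(k+1)}$ with a limiting constant via a subadditivity/superadditivity argument, and then upgrade the convergence in expectation to almost sure convergence using concentration. The natural first move is to find the right exponent by a scaling heuristic, then make it rigorous. If I partition the unit square (or rather, the relevant region adapted to the curve $\G_k$) into a grid of $N^2$ congruent subrectangles and restrict attention to $k$-monotone chains that pass through a prescribed staircase sequence of cells, transitivity (Lemma~\ref{lemma_transitive}) lets me concatenate a $k$-monotone chain in one cell with a $k$-monotone chain in the next, \emph{provided} the boundary derivative data match up. This is where the boundary multisets $\gamma_k(0)^{\circ k}$ and $\gamma_k(1)^{\circ k}$ in Definition~\ref{def_mkxn} earn their keep: by forcing the $j$th divided differences to start at $0$ and increase monotonically (as noted in \eqref{mkmonotone}), they guarantee that locally $k$-monotone pieces glue into a globally $k$-monotone chain.

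The heart of the argument would be a \textbf{subadditive/superadditive functional setup}. I would define, for a rectangle $R$ with prescribed boundary derivative conditions of order $k$ on its two ``corners'', the expected length $f(R)$ of the longest $k$-monotone chain inside $R$ compatible with those conditions. Affine rescaling in the $x$-direction by $\lambda$ and in the $y$-direction by $\lambda^k$ maps the curve $y=x^k$ to itself and preserves $k$-monotonicity, and it maps a Poisson/uniform sample of intensity $n$ to one of appropriately scaled intensity; chasing through how the number of points in a sub-box of side $\lambda$ scales shows that the longest chain length scales like (points)$^{1/(k+1)}$, which pins the exponent. To turn the heuristic into a limit, I would Poissonize: replace $X_n$ by a Poisson point process of intensity $n$, set up the longest-chain functional over boxes, verify it is superadditive under concatenation along the staircase (using transitivity to glue) and subadditive when we chop a chain into pieces, and invoke a Fekete-type / Kingman subadditive ergodic argument to extract the constant $\alpha_k$. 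De-Poissonization then transfers the limit back to the fixed-$n$ model.

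For the \textbf{almost sure convergence}, the standard route is a concentration inequality. The functional $L^k(X_n)$ is built from $n$ independent points, and changing one point alters the longest $k$-monotone chain length by at most a controlled amount, so I would apply a bounded-differences (Azuma--Hoeffding / McDiarmid) estimate, or better, a Talagrand-type convex-distance concentration adapted to the Poissonized functional, to show $L^k_n$ is concentrated around its mean within $o(n^{1/(k+1)})$ with exponentially small failure probability. Summing the tail bounds over $n$ and applying Borel--Cantelli (possibly along a subsequence $n_m$ with interpolation between consecutive values, using monotonicity of $L^k_n$ in $n$) upgrades convergence in expectation to almost sure convergence. The exponent $1/(k+1)$ means the fluctuations one must control are of lower order than the mean, so the concentration bounds must be sharp enough to beat $n^{1/(k+1)}$; this is where care is needed but the machinery is standard.

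The main obstacle I anticipate is the \textbf{gluing step under the $k$-general-position and boundary-matching constraints}: for $k \geq 3$ there is no single function whose graph carries all the points (by Rote's counterexample cited above), so the concatenation of locally $k$-monotone chains into a globally $k$-monotone one cannot rely on a global interpolating function and must instead be driven purely by the transitivity of the positivity coloring (Lemma~\ref{lemma_transitive}) together with the monotonicity of successive divided differences enforced by the boundary multisets. Verifying that the divided-difference data at the interface between two grid cells is compatible in both directions---so that superadditivity holds without losing more than a negligible number of points to ``boundary defects''---is the delicate combinatorial core, and controlling the resulting error terms so they remain $o(n^{1/(k+1)})$ is what makes the constant $\alpha_k$ well defined.
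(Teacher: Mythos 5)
Your overall architecture --- Poissonize, set up a superadditive chain functional over cells adapted to $\Gamma_k$, glue via transitivity, invoke Kingman's subadditive ergodic theorem, de-Poissonize --- is exactly the paper's route (Theorem~\ref{thm_expectation_poisson} via Theorem~\ref{thm_kingman}, with Lemma~\ref{lemma_concatenate} for gluing, Lemma~\ref{lemma_transfo} for stationarity, and Proposition~\ref{poissonuniform} for the transfer back). However, there is one genuine gap, and it sits precisely where you wave your hands. Kingman's theorem applied to $X_{m,n}=-L^k(n,m)$ requires condition S3, i.e.\ an a priori \emph{linear upper bound} $\EE L^k(0,n)\leq c_k n$. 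You propose to get the complementary bound from the functional being ``subadditive when we chop a chain into pieces,'' but this fails: if a long $k$-monotone chain in $C_k(0,n)$ is cut at an intermediate abscissa $b$, the two halves need not satisfy the boundary conditions at $\gamma_k(b)^{\circ k}$ (the chain does not pass through $\gamma_k(b)$, and its divided-difference data at the cut point is arbitrary subject only to monotonicity), so $L^k(0,n)\leq L^k(0,b)+L^k(b,n)$ does not hold. The functional is only superadditive, and the linear upper bound must be proved by a separate argument. In the paper this is Lemma~\ref{lemma_expsmall}, which is the most technical component: one conditions on the skeleton of a hypothetical long chain, shows via Newton interpolation and the telescoping of the $(k-1)$st divided differences that at least $n/3$ of the inter-skeleton regions $R_i$ have area at most $(3k)^{k+1}$, and then uses Poisson tail bounds and independence across disjoint regions to conclude that the existence probability of a chain of length $Cn$ is $o(n^{-k})$. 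Nothing in your proposal supplies this, and without it the constant $\alpha_k$ extracted from Kingman could a priori be $+\infty$ (equivalently, $\gamma=-\infty$ for the negated process).

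Two smaller remarks. First, you identify the gluing step as the delicate core, but in the paper it is nearly immediate: Lemma~\ref{lemma_concatenate} inserts $\gamma_k(b)^{\circ k}$ between the two sub-chains and then deletes its copies one by one using transitivity (Lemma~\ref{lemma_transitive}); the boundary multisets are designed exactly so that no ``boundary defect'' bookkeeping is needed. Second, your route to almost sure convergence (Talagrand concentration at scale $n^{1/(2(k+1))}$ plus Borel--Cantelli) is viable and would work, but the paper gets it for free from Liggett's ergodicity clause in Theorem~\ref{thm_kingman}, since the blocks $L^k(in,(i+1)n)$ are i.i.d.; the concentration inequality (Theorem~\ref{thm_conc}) is then a separate, stronger statement rather than an ingredient of Theorem~\ref{thm_expectation}.
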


The exact value of the constant is not known except for the case $k=1$, where $\alpha_1 = 2$ holds. However, we may estimate it from below:

\begin{prop} \label{prop_lowerbound}
For every $k \geq 1$, $\alpha_k \geq \frac 1 6 $.
\end{prop}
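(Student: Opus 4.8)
The plan is to prove the lower bound by exhibiting an explicit construction of a long $k$-monotone chain and showing that it has the required length with high probability. Since Theorem~\ref{thm_expectation} already guarantees that $n^{-1/(k+1)} L^k_n \to \alpha_k$ almost surely, it suffices to produce, for each $n$, a $k$-monotone chain in $X_n$ of cardinality at least $(\tfrac16 - o(1)) n^{1/(k+1)}$ with probability tending to $1$; taking expectations and passing to the limit then yields $\alpha_k \ge 1/6$. The natural construction is to build the chain greedily along the boundary curve $\G_k = (x, x^k)$. Concretely, I would partition the relevant region near $\G_k$ into a sequence of small boxes arranged so that selecting one random point from each consecutive box automatically produces a $k$-monotone chain, then count how many such boxes can be packed while keeping the failure probability (a box being empty) controlled.

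The key steps, in order, are as follows. First I would set up a greedy/sequential selection procedure: starting from $\g_k(0)$, repeatedly pick the next point of the chain from a box immediately ``to the right'' and slightly ``above'' the graph of the degree-$(k-1)$ interpolating polynomial through the last $k$ chosen points. The crucial geometric observation, which I would make precise using the divided-difference characterization and Lemma~\ref{lemma_transitive}, is that positivity of each new $(k+1)$-tuple is guaranteed as long as the new point lies above the Newton interpolant (Lemma~\ref{newton_interpolation}) of the previous $k$ points; transitivity (Lemma~\ref{lemma_transitive}) then upgrades this local condition to full $k$-monotonicity of the whole chain, so I only ever need to verify one positivity condition per step. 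Second, I would estimate the ``cost'' in the $x$-direction of each step: if each selection box has $x$-width $\delta$ and sits in a vertical strip of the appropriate height near $\G_k$, then the box has area of order $\delta^{k+1}$ (the height scales like $\delta^k$ because the $k$th-order gap between the polynomial and the curve $x^k$ scales that way). Choosing $\delta$ so that each box contains a point with high probability — i.e. $n \cdot \delta^{k+1} \gg \log n$, hence $\delta \approx c\, n^{-1/(k+1)}$ — gives a chain of length $\approx 1/\delta \approx c^{-1} n^{1/(k+1)}$, and optimizing the constant $c$ subject to the emptiness and positivity constraints should deliver the factor $\tfrac16$.

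The main obstacle I expect is the precise bookkeeping of the geometry that forces positivity at each step while simultaneously keeping the boxes large enough that none is empty. Specifically, the new point must lie above the interpolating polynomial of the previous $k$ points (to make the new $(k+1)$-tuple positive) but still below $\G_k$ (to respect the boundary/multiset condition at $\g_k(1)$ encoded in Definition~\ref{def_mkxn}), and these two constraints together pin down the available vertical room as a function of the step position $x$ and the width $\delta$. Getting the correct exponent is routine once the area scaling $\delta^{k+1}$ is established, but extracting the \emph{sharp} constant $\tfrac16$ requires a careful, possibly position-dependent choice of box widths (a change of variables straightening $\G_k$, or a comparison of the accumulated $x$-displacement against an integral) rather than a uniform grid. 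I would therefore organize the computation so that the constant emerges from a single one-dimensional optimization over the box-width profile, and verify at the end that the union bound over the $O(n^{1/(k+1)})$ emptiness events is $o(1)$, so the construction succeeds with high probability.
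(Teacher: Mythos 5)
Your plan has a genuine gap at its quantitative core. You propose to choose the box width $\delta$ so that \emph{every} box is non-empty with high probability via a union bound, which forces $n\,\delta^{k+1}\gg\log n$ and hence a chain of length $1/\delta\ll (n/\log n)^{1/(k+1)}=o(n^{1/(k+1)})$. That is incompatible with your other choice $\delta\approx c\,n^{-1/(k+1)}$, under which each box contains a Poisson/binomial number of points with \emph{constant} mean, so a constant fraction of the boxes are empty and the union bound fails outright. With the logarithmic loss you cannot conclude $\alpha_k\geq 1/6$, nor even $\alpha_k>0$. The only way to salvage a true constant is to \emph{allow} empty boxes and count the non-empty ones (a binomial variable concentrated at a constant fraction of the number of boxes) — but then your greedy construction breaks, because when a box is skipped the next point is no longer guaranteed to lie above the Newton interpolant of the previous $k$ \emph{selected} points; the local positivity certificate you rely on is tied to consecutive occupied boxes and does not survive deletions.

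The paper avoids all of this interpolation bookkeeping. It works in the Poisson model and partitions $[0,n]$ into $\lfloor n/3\rfloor$ cells $C_k(3i,3(i+1))$ anchored at the multiset points $\gamma_k(3i)^{\circ k}$ on $\Gamma_k$. By Lemma~\ref{lemma_cell}, \emph{any single point} of a cell forms a $k$-monotone chain with its two anchors, and Lemma~\ref{lemma_concatenate} lets one concatenate across cells and then erase the interior anchors — so one point per non-empty cell yields a valid chain no matter which cells happen to be empty. Each cell has area $>1$, hence is non-empty with probability $>1-1/e>1/2$, and a Chernoff bound on the binomial count of non-empty cells gives $L^k(0,n)\geq Y>n/6$ with high probability. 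Note also that $1/6$ is not a sharp constant extracted from an optimization over box-width profiles as you suggest; it is simply $\tfrac13\cdot\tfrac12<\tfrac13(1-1/e)$. If you want to pursue your route, you would need to replace the per-step interpolant condition by a condition that is stable under skipping boxes — which is exactly what the anchored cells $C_k(a,b)$ provide.
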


By utilizing the deviation estimates of Talagrand \cite{T96}, we obtain the strong concentration property of $L^k_n$:

\begin{theorem} \label{thm_conc}
For every $k\geq 1$, and for every $\eps>0$,
\[
\PP\left(|L^k_n - \EE L^k_n| > \eps n^{\frac{1}{2 (k+1)}} \right) \leq 5 e^{ - \eps^2 / 5 \alpha_k}
\]
holds for every sufficiently large $n$. 
\end{theorem}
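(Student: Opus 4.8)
The plan is to prove the concentration inequality in Theorem~\ref{thm_conc} by an application of Talagrand's convex distance inequality \cite{T96}, which is the standard tool for functionals of independent random points that satisfy a Lipschitz-type stability together with a certificate condition. First I would recall the general principle: if $f$ is a function of $n$ i.i.d.\ random variables that is $1$-Lipschitz with respect to Hamming distance after reweighting, and if whenever $f(X) \geq s$ there exists a certificate set of at most $c \cdot s$ coordinates witnessing this, then Talagrand's inequality yields a sub-Gaussian tail of the form $\PP(f \geq m + t)$ and $\PP(f \leq m - t)$ decaying like $e^{-t^2/(4cs)}$ around a median $m$. Here the functional is $f = L^k_n$, the underlying independent objects are the $n$ random points, and the natural certificate for the event $L^k_n \geq s$ is the set of $s$ points forming a longest $k$-monotone chain.

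The key steps, in order, are as follows. First I would verify the two hypotheses of Talagrand's inequality for $L^k_n$. The Lipschitz/stability property holds because deleting, adding, or moving a single point changes the length of the longest $k$-monotone chain by at most $1$: removing one point of an optimal chain shortens it by at most one, and conversely. The certificate property holds because a chain of length $s$ realizing $L^k_n \geq s$ is witnessed precisely by those $s$ points; crucially, by the transitivity of positivity recorded in Lemma~\ref{lemma_transitive}, verifying $k$-monotonicity only requires checking the consecutive $(k+1)$-tuples, so the witness is genuinely supported on the $s$ chain points and the certificate weight is linear in $s$. This gives a bound of the shape $\PP(|L^k_n - M_n| > t) \leq C \exp(-t^2 / (C' M_n))$ where $M_n$ is a median of $L^k_n$. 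Second, I would convert the deviation around the median into a deviation around the expectation, using the standard fact that for such sub-Gaussian functionals the median and the mean differ by a lower-order term, and that $\EE L^k_n \sim \alpha_k n^{1/(k+1)}$ by Theorem~\ref{thm_expectation}. Third, I would substitute $t = \eps\, n^{1/(2(k+1))}$ and $M_n \approx \alpha_k n^{1/(k+1)}$, so that the exponent becomes of order $-\eps^2 / \alpha_k$; tracking the constants carefully through Talagrand's formulation should yield the stated bound $5\, e^{-\eps^2/(5\alpha_k)}$ for all sufficiently large $n$, where the numerical constants $5$ absorb the gap between median and mean and the factors arising in the two-sided estimate.

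The main obstacle I anticipate is not the probabilistic machinery itself but the bookkeeping needed to get the exact constants $5$ and $5\alpha_k$ in the exponent. Talagrand's inequality most naturally produces a bound centered at a median with a factor like $4$ in the exponent, so obtaining a clean statement around $\EE L^k_n$ requires controlling $|M_n - \EE L^k_n|$ precisely: one must show this difference is $o(n^{1/(2(k+1))})$, which itself follows from integrating the tail bound but introduces the lower-order correction that forces the ``sufficiently large $n$'' qualifier and inflates the constants slightly. A secondary subtlety is confirming that the certificate weight is exactly $s$ and not some $k$-dependent multiple; here I would lean on the transitivity lemma to argue that, despite each positivity check involving $k+1$ points, the union of all certifying tuples is still just the chain itself, so the Talagrand certificate function can be taken with total weight proportional to the chain length independently of $k$. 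Once these two points are settled, the remaining computation is a routine substitution of the asymptotics from Theorem~\ref{thm_expectation}.
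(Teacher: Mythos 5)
Your proposal is correct and follows essentially the same route as the paper: an application of Talagrand's certifiable-functional inequality with the longest chain itself serving as the certificate (so the certificate function is $f(b)=b$), followed by the median-to-mean conversion and the substitution $s=\eps n^{1/(2(k+1))}$, $m\approx\alpha_k n^{1/(k+1)}$, with the constants $5$ absorbing the slack. The only cosmetic difference is that you invoke Lemma~\ref{lemma_transitive} to justify the certificate being supported on the chain points, which is not strictly needed since $k$-monotonicity of a chain is already a property of those points alone.
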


Finally, we conjecture that similarly to the $k=2$ case \cite{AB09}, the above stochastic concentration property leads to {\em geometric concentration}: it implies that the {\em limit shape} of longest $k$-monotone chains satisfying the boundary conditions
is $\G_k$.

\begin{conjecture} \label{thm_limshape} For any $k\geq 1$, the longest $k$-monotone chains converge in probability to $\G_k$. That is, for any $\eps >0$,
\[
\PP( \exists \textrm{ longest $k$-monotone chain with distance $>\eps$ from $\Gamma_k$}) \rightarrow 0
\]
as $n \rightarrow \infty$.
\end{conjecture}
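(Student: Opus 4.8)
The plan is to deduce the geometric concentration from the stochastic concentration of Theorem~\ref{thm_conc} together with a deterministic \emph{variational principle} identifying $\G_k$ as the unique shape along which a chain can gather the asymptotically maximal number of points. Concretely, I expect that for an admissible profile $\psi \colon [0,1] \to \R$ meeting the boundary data encoded in \eqref{multsetchain} (that is, $\psi^{(j)}(0)$ and $\psi^{(j)}(1)$ agreeing with those of $x^k$ for $0 \le j \le k-1$), the expected length of the longest $k$-monotone chain whose points stay within a thin tube around the graph of $\psi$ is asymptotic to
\[
\alpha_k \, n^{\frac{1}{k+1}} \, I(\psi), \qquad I(\psi) := \int_0^1 \left( \frac{\psi^{(k)}(x)}{k!} \right)^{\frac{1}{k+1}} dx ,
\]
where $I(x^k) = 1$ recovers the global constant. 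The first task is to establish this local law and to show that $I$ is uniquely maximized, over admissible profiles, by $\psi(x) = x^k$, with a quantitative gap: for every $\eps>0$ there is $\delta = \delta(\eps)>0$ such that $I(\psi) \le 1 - \delta$ whenever the graph of $\psi$ has distance at least $\eps$ from $\G_k$.

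The local law I would obtain by a subadditivity/localization argument exploiting the self-similarity of the problem. The scaling $(x,y) \mapsto (\lambda x, \lambda^k y)$ maps $\G_k$ to itself and preserves $k$-monotonicity, since it sends graphs of $x^k$ to graphs of $x^k$; combined with the vertical scaling $y \mapsto s y$, which multiplies every $k$-th divided difference by $s>0$ and hence preserves positivity, this lets me normalize a short arc of width $w$ carrying local $k$-th derivative $\psi^{(k)}(x)$ to the reference problem on the unit square. Counting points is scale-invariant, so each such arc contributes $\alpha_k (n w^{k+1})^{1/(k+1)}$ after the $\lambda$-rescaling, with a Jacobian factor $(\psi^{(k)}/k!)^{1/(k+1)}$ from the vertical normalization; summing over a partition of $[0,1]$ yields the integral $I(\psi)$. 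Gluing the local chains into one global $k$-monotone chain uses the transitivity of positivity (Lemma~\ref{lemma_transitive}) together with the monotonicity of the consecutive divided differences forced by the boundary conditions (cf.\ the discussion around \eqref{mkmonotone}), so that matching the interface data across cells keeps all consecutive $(k+1)$-tuples positive. The uniqueness and the gap then follow from strict concavity of $t \mapsto t^{1/(k+1)}$: since $\int_0^1 \psi^{(k)} = \psi^{(k-1)}(1) - \psi^{(k-1)}(0)$ is pinned by the boundary data, Jensen's inequality forces the maximizer to have constant $k$-th derivative, and the $2k$ pinned values $\psi^{(j)}(0), \psi^{(j)}(1)$ then single out $\psi = x^k$ uniquely.

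With the variational gap in hand, I would conclude probabilistically as follows. Fix $\eps>0$ and cover the (compact) family of admissible profiles at distance $\ge \eps$ from $\G_k$ by finitely many tubes $T_1, \ldots, T_M$, $M = M(\eps)$, each so thin that the longest $k$-monotone chain $L(T_i)$ contained in $T_i$ has $\EE L(T_i) \le (1 - \delta/2)\,\alpha_k\, n^{1/(k+1)}$, by the local law and the gap. Applying the Talagrand-type estimate behind Theorem~\ref{thm_conc} to each $L(T_i)$ gives
\[
\PP\!\left( L(T_i) > (1 - \delta/4)\,\alpha_k\, n^{\frac{1}{k+1}} \right) \le 5\, e^{-c \delta^2 n^{\frac{1}{k+1}}/\alpha_k}
\]
for a constant $c>0$, which survives a union bound over the $M(\eps)$ tubes. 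Since Theorem~\ref{thm_expectation} gives $L^k_n \ge (1 - \delta/8)\,\alpha_k\, n^{1/(k+1)}$ with probability tending to $1$, on the intersection of these high-probability events no longest chain can be contained in any far tube; hence every longest chain lies within $\eps$ of $\G_k$, which is exactly the assertion.

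The routine part is the probabilistic endgame: given the local law and the gap, the union bound over a finite net is standard, and Theorem~\ref{thm_conc} supplies the required deviation bound. The hard part will be the deterministic core. Establishing the local law rigorously is delicate precisely because, for $k \ge 3$, a $k$-monotone chain need not be the sampling of a single $k$-monotone function (Rote's counterexample in \cite{EM13}); one must therefore work throughout with the discrete, transitivity-based description of $k$-monotonicity and control the divided differences at the cell interfaces so that the concatenated chain stays globally positive while losing only a lower-order number of points. Proving the continuity of $I$ and the uniform approximation of admissible profiles by the finitely many tubes, despite the possibly unbounded behaviour of $\psi^{(k)}$ near the endpoints, is the main technical hurdle I would expect to confront.
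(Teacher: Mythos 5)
The statement you are proving is stated in the paper as Conjecture~\ref{thm_limshape}; the paper offers no proof of it, so there is nothing to compare your argument against except the known $k=2$ case in \cite{AB09}, whose strategy (a variational principle for the density of points a chain near a given profile can collect, plus Talagrand concentration and a union bound over a net of tubes) is exactly the one you outline. Your probabilistic endgame is sound in shape --- the exponent $e^{-c\delta^2 n^{1/(k+1)}}$ is what Talagrand's inequality with certificate $f(b)=b$ delivers, and it easily absorbs a union bound over finitely many tubes --- but everything rests on the ``local law'' and the reduction of the event to it, and neither is established.

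The genuine gap is the reduction itself, not just the local law. To run the tube argument you must first show that \emph{every} long $k$-monotone chain at distance $>\eps$ from $\G_k$ is contained in a thin tube around the graph of \emph{some} admissible $k$-monotone profile $\psi$ with $I(\psi)\leq 1-\delta$. For $k=1,2$ this is automatic because a $k$-monotone chain always interpolates a single monotone/convex function. For $k\geq 3$ Rote's counterexample (cited in \cite{EM13}) shows precisely that a $k$-monotone chain need not lie on the graph of any function with nonnegative $k$-th derivative, so the family of ``admissible profiles'' does not a priori exhaust the ways a chain can deviate from $\G_k$, and the covering step collapses. You flag this difficulty, but you place it inside the proof of the local law, whereas it already invalidates the framing of the variational problem: one would need either a structure theorem saying that chains of near-maximal length \emph{are} approximable by admissible graphs (plausible, but itself a substantial result), or an entirely discrete, divided-difference formulation of the functional $I$. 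Secondary but real issues are the compactness of the profile class (the pinned derivatives at $0$ and $1$ do not give equicontinuity of $\psi^{(k)}$, and $I$ need not be upper semicontinuous there) and the need for a quantitative stability version of the Jensen step to get $\delta(\eps)>0$ rather than mere uniqueness of the maximizer. As it stands the proposal is a reasonable program --- essentially the one the author implicitly has in mind in stating the conjecture --- but not a proof.
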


We stated the results for $X_n$ being chosen from the unit square. As we will see in Section~\ref{sec_geom}, the boundary conditions imply that only a small fraction of the square plays a role here: all members of $\M^k(X_n)$ lie in $C_k(0,1)$, see Definition~\ref{def_ckab}. The area of the region is $1 / (k\, 2^{k - 1})$ (see \eqref{eq_Area}); therefore, switching the base domain from the square to $C_k(0,1)$ results in multiplying $\alpha_k$ by a factor of 2, without changing the order of magnitude of $\EE L^k_n$.

\section{Geometric properties}\label{sec_geom}

We start with a geometric characterization of positivity. Let $(p_1, \ldots, p_{k+1})$  be a $(k+1)$-tuple. We define its {\em sign} to be the sign of $\D_k(p_1, \ldots, p_{k+1})$.

\begin{lemma}[\cite{EM13}, Lemma 2.4.]  \label{lemma_sign}
Assume that $\CalP = (p_1, \ldots, p_{k+1})$ is a chain in $k$-general position. For any $i \in [k+1]$, let $P_i$ be the unique polynomial of degree $k-1$ containing all the points $p_j$, $j \neq i$. The $(k+1)$-tuple $\CalP$ has sign $(-1)^{k-i}$ if $p_i$ lies below the graph of $P_i$, and has sign  $(-1)^{k-i+1}$ if $p_i$ lies above the graph.
\end{lemma}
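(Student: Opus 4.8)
The plan is to express the top divided difference $\D_k(\CalP)$ explicitly as a linear form in the $y$-coordinates of the points, and then to read off its sign from the coefficient of $y_i$ combined with the vertical position of $p_i$ relative to $P_i$. The starting observation is that, by Newton's formula \eqref{newtonpoly}, $\D_k(p_1, \ldots, p_{k+1})$ is exactly the coefficient of $x^k$ in the unique interpolating polynomial $P$ of degree at most $k$ passing through $p_1, \ldots, p_{k+1}$: the only summand of \eqref{newtonpoly} of full degree $k$ is $\D_k(p_1, \ldots, p_{k+1}) \prod_{i=1}^k (x - x_i)$, whose leading coefficient is $\D_k(p_1, \ldots, p_{k+1})$.

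First I would write the same polynomial $P$ in Lagrange form, $P(x) = \sum_{i=1}^{k+1} y_i \, \ell_i(x)$ with $\ell_i(x) = \prod_{j \ne i} \frac{x - x_j}{x_i - x_j}$. Extracting the coefficient of $x^k$ yields the symmetric representation
\[
\D_k(p_1, \ldots, p_{k+1}) = \sum_{i=1}^{k+1} c_i \, y_i, \qquad c_i = \frac{1}{\prod_{j \ne i} (x_i - x_j)},
\]
so that $\D_k$ is an affine-linear function of each single coordinate $y_i$, with coefficient $c_i$ depending only on the $x$-coordinates. The sign of $c_i$ is the crux: since $x_1 < \cdots < x_{k+1}$, exactly the $(k+1) - i$ factors in the denominator with $j > i$ are negative, whence $\sgn(c_i) = (-1)^{k+1-i}$.

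To finish, I would observe that if $p_i$ lay on the graph of $P_i$, then all $k+1$ points would lie on the degree-$(k-1)$ polynomial $P_i$, forcing $\D_k(\CalP) = 0$; hence, as a linear function of $y_i$, $\D_k(\CalP)$ vanishes precisely at $y_i = P_i(x_i)$, and therefore $\D_k(\CalP) = c_i\,(y_i - P_i(x_i))$. The assumption of $k$-general position guarantees $y_i \ne P_i(x_i)$, so $p_i$ lies strictly above or strictly below $P_i$ and the sign is well defined and nonzero. If $p_i$ lies below $P_i$ then $y_i - P_i(x_i) < 0$ and $\sgn \D_k(\CalP) = -(-1)^{k+1-i} = (-1)^{k-i}$; if $p_i$ lies above $P_i$ then $y_i - P_i(x_i) > 0$ and $\sgn \D_k(\CalP) = (-1)^{k+1-i} = (-1)^{k-i+1}$, exactly as claimed. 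The only delicate point is the sign bookkeeping for $c_i$; everything else is a direct consequence of Newton's interpolation formula and the linearity of the divided difference in each coordinate.
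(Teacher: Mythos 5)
Your proof is correct. Note that the paper itself gives no proof of this lemma: it is imported verbatim from Eli\'a\v{s} and Matou\v{s}ek \cite{EM13} (their Lemma 2.4), so there is no in-paper argument to compare against. Your route --- identifying $\D_k(\CalP)$ with the leading coefficient of the interpolating polynomial via \eqref{newtonpoly}, rewriting it in Lagrange form as $\sum_i y_i/\prod_{j\ne i}(x_i-x_j)$, reading off $\sgn(c_i)=(-1)^{k+1-i}$ from the ordering of the $x$-coordinates, and then using affine-linearity in $y_i$ together with the vanishing at $y_i=P_i(x_i)$ to get $\D_k(\CalP)=c_i\bigl(y_i-P_i(x_i)\bigr)$ --- is a complete and standard derivation, and the sign bookkeeping checks out in both cases. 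The appeal to $k$-general position to rule out $y_i=P_i(x_i)$ is exactly the right use of that hypothesis.
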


We may naturally extend the above statement for chains containing multiple points. In the next lemma, we illustrate this for the special case when the chain consists of only 3 points, with the two endpoints having multiplicity larger than 1. The same method can be applied for the general case as long as the points of multiplicity lie on the graph of a function $f$ with the necessary differentiability properties: if $p = (x, f(x))$ has multiplicity $\beta$ in the chain, then  the approximating polynomial $P$  is required to have derivatives agreeing with those of $f$ up to order $\beta - 1$  at $x$.

Below, we define $f^{(0)}(x) := f(x) $.

\begin{lemma} \label{lemma_multsign}
Let $q = (a, f(a))$ and $ \tilde{q} = (b, f(b)) $, $a<b$ be points on the graph of a function $f$, which is $(k-1)$-times differentiable in the interval $[a,b]$. Assume furthermore that $f^{(k-1)} $ does not vanish on $[a,b]$. Let $1 \leq i \leq k+1$, and denote by  $\Phi_{i,k}(a,b,f)$ the unique polynomial of degree $k-1$ which satisfies
\begin{align}\label{def_phidef}
\begin{split}
\Phi_{i,k}^{(j)}(a,b,f)(a) &= f^{(j)}(a) \textrm{ for every } 0 \leq j \leq i-2 \textrm{, and} \\
\Phi_{i,k}^{(j)}(a,b,f)(b) &= f^{(j)}(b) \textrm{ for every } 0 \leq j \leq k -i.
\end{split}
\end{align}
Assume that for the point $p \neq q, \tilde{q}$, the $(k+1)$-tuple $\CalP'=(q^{\circ (i-1)}, p, \tilde{q}^{\circ(k-i+1)})$ is a chain. Then $\CalP'$ has sign $(-1)^{k-i}$ if $p$ lies below the graph of $\Phi_{i,k}^{(j)}(a,b,f)$, and has sign  $(-1)^{k-i+1}$ if $p$ lies above the graph.
\end{lemma}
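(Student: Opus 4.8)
The plan is to compute $\sgn \D_k(\CalP')$ directly, by identifying $\D_k(\CalP')$ with the leading coefficient of an explicit degree-$k$ polynomial and comparing that polynomial with $\Phi := \Phi_{i,k}(a,b,f)$. Write $p = (x_p,y_p)$; since $\CalP'$ is a chain, $x_p$ lies strictly between $a$ and $b$. Let $H$ be the Hermite interpolation polynomial of degree $k$ determined by the multiset $\CalP'$: it matches $f$ and its derivatives up to order $i-2$ at $a$, passes through $p$, and matches $f$ and its derivatives up to order $k-i$ at $b$. These amount to $(i-1)+1+(k-i+1)=k+1$ conditions, so $H$ is well defined; note that the differentiability hypothesis on $f$ is exactly what is needed, since the highest derivative order invoked is $\max(i-2,k-i)\le k-1$.

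First I would establish the key identity that $\D_k(\CalP')$ equals the coefficient of $x^k$ in $H$. For a $(k+1)$-tuple of \emph{distinct} points this is immediate from Lemma~\ref{newton_interpolation}: the top summand of \eqref{newtonpoly} is $\D_k(p_1,\ldots,p_{k+1})\prod_{i=1}^{k}(x-x_i)$, and $\prod_{i=1}^{k}(x-x_i)$ is monic. To reach the multiset case I would use confluence: replace $q^{\circ(i-1)}$ by $i-1$ distinct points clustered near $a$ on the graph of $f$, and $\tilde{q}^{\circ(k-i+1)}$ by $k-i+1$ distinct points clustered near $b$ on the graph of $f$, keeping $p$ fixed. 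By the continuity of divided differences recorded after \eqref{def_dmultset}, the $\D_k$ of the perturbed tuple converges to $\D_k(\CalP')$, while the associated degree-$k$ interpolants converge coefficientwise to $H$; hence the limit of their leading coefficients is the leading coefficient of $H$.

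With the identity in hand, the sign extraction is pure algebra. Both $H$ and $\Phi$ match the same data $f^{(j)}(a)$ for $0\le j\le i-2$ and $f^{(j)}(b)$ for $0\le j\le k-i$, so $H-\Phi$ has a zero of order at least $i-1$ at $a$ and at least $k-i+1$ at $b$; since $\deg(H-\Phi)\le k$ and $(i-1)+(k-i+1)=k$, this forces
\[
H(x)-\Phi(x)=c\,(x-a)^{i-1}(x-b)^{k-i+1}
\]
for some constant $c$. Comparing the coefficient of $x^k$ and using $\deg\Phi\le k-1$ gives $c=\D_k(\CalP')$. Evaluating at $x_p$ with $H(x_p)=y_p$ yields
\[
\D_k(\CalP')=\frac{y_p-\Phi(x_p)}{(x_p-a)^{i-1}(x_p-b)^{k-i+1}}.
\]
Since $a<x_p<b$, we have $(x_p-a)^{i-1}>0$ and $\sgn\big((x_p-b)^{k-i+1}\big)=(-1)^{k-i+1}$, so the denominator has sign $(-1)^{k-i+1}$, while the numerator is negative exactly when $p$ lies below $\Phi$ and positive exactly when it lies above. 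Multiplying signs gives $\sgn\D_k(\CalP')=(-1)\cdot(-1)^{k-i+1}=(-1)^{k-i}$ in the former case and $(+1)\cdot(-1)^{k-i+1}=(-1)^{k-i+1}$ in the latter, which is the assertion.

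The main obstacle is the confluence step: justifying rigorously that the recursively defined multiset difference of \eqref{diffdef}--\eqref{def_dmultset} equals the leading Hermite coefficient, which amounts to interchanging the collapsing-cluster limit with both the divided-difference recursion and the interpolation. This is where the standing hypothesis that $f^{(k-1)}$ does not vanish on $[a,b]$ is convenient: it guarantees that the perturbed configurations are in $k$-general position, so the distinct-point interpolant is non-degenerate and Lemma~\ref{newton_interpolation} applies cleanly, and it lets one pass to the limit with a stable, nonzero sign. An alternative that follows ``the same method'' as Lemma~\ref{lemma_sign} is to perturb as above and apply Lemma~\ref{lemma_sign} at index $i$ directly, then take limits using the convergence of the omitted-point polynomial $P_i\to\Phi$; the only delicate points there are, once more, general position and sign stability in the limit.
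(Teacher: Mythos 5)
Your argument is correct, but it takes a genuinely different route from the paper. The paper's proof is a pure limiting argument: it perturbs $q^{\circ(i-1)}$ and $\tilde q^{\circ(k-i+1)}$ into distinct points along the graph of $f$, applies Lemma~\ref{lemma_sign} to the perturbed tuple, and passes to the limit, using Lemma~\ref{lemma_cauchy} and \eqref{def_dmultset} for convergence of the divided differences and convergence of derivatives for $P_i\to\Phi_{i,k}(a,b,f)$ --- essentially the second ``alternative'' you sketch at the end. You instead prove the stronger, quantitative identity
\[
\D_k(\CalP')=\frac{y_p-\Phi_{i,k}(a,b,f)(x_p)}{(x_p-a)^{i-1}(x_p-b)^{k-i+1}},
\]
via the factorization $H-\Phi=c\,(x-a)^{i-1}(x-b)^{k-i+1}$, confining the confluence argument to the single standard fact that $\D_k(\CalP')$ is the leading coefficient of the Hermite interpolant. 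This buys an explicit formula (not just a sign), sidesteps the sign-stability issue in the limit, and in fact shows the nonvanishing of $f^{(k-1)}$ is not needed for the sign statement itself; the paper's route is shorter because it outsources everything to Lemma~\ref{lemma_sign}. One small inaccuracy: your claim that the chain condition forces $a<x_p<b$ fails in the boundary cases $i=1$ (no copy of $q$, so $x_p$ may be less than $a$) and $i=k+1$ (no copy of $\tilde q$, so $x_p$ may exceed $b$) --- precisely the cases the paper uses afterwards for tuples $(q^{\circ k},p)$. Fortunately your sign computation survives, since the corresponding factor $(x_p-a)^{0}$ or $(x_p-b)^{0}$ is then identically $1$, but you should state the positional hypothesis case by case rather than asserting $x_p\in(a,b)$ outright.
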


\begin{proof}
The statement follows directly from Lemma~\ref{lemma_sign} by letting the points $p_1, \ldots, p_{i-1}$ converge to $q$, and $p_{i+1}, \ldots, p_{k+1}$ converge to $\tilde{q}$, along the graph of $f$. By Lemma~\ref{lemma_cauchy} and \eqref{def_dmultset}, all the divided differences of $\CalP$ converge to the corresponding divided differences of $(q^{\circ (i-1)}, p, \tilde{q}^{\circ(k-i+1)})$. Moreover, the polynomial $P_i$ converges to $\Phi_{i,k}(a,b,f)$, which is shown by convergence of the derivatives. Therefore, the statement follows.
\end{proof}

Note that the $(k-1)$-times differentiability of $f$ on the interval $(a,b)$ is not fully used; we only prescribe the derivatives of $\Phi_{i,k}(a,b,f)$  at $a$ and $b$ up to order $i-1$ and $k-i -1$, respectively.

As a consequence, consider the special $(k+1)$-tuple of the form $(q^{\circ k}, p)$, and let $\Phi_k(a,f):= \Phi_{k, k-1}(a,b,f)$ denote the above defined polynomial.
If $k$ is even, then for any other point $p$, the $(k+1)$-tuple $(q^{\circ k}, p)$ is positive, if $p$ lies above the graph of $\Phi_k(a,f)$. If $k$ is odd, then for any other point $q = (x,y)$, the $(k+1)$-tuple $(q^{\circ k}, p)$ is positive, if $x <a$ and $p$ lies below the graph of $\Phi_k(a,f)$, or if $x>a$, and $p$ lies above the graph of $\Phi_k(a,f)$.

Next, we apply Lemma~\ref{lemma_multsign} for the multisets of the form $(\gamma_k(a)^{\circ k}, p, \gamma_k(b)^{\circ k})$, where $0 \leq a < b$, and $k \geq 1$, see \eqref{gammakdef}, \eqref{multsetchain}. Some notations are in order. Let $a,b \geq 0$. Define the polynomials
\begin{align}
\Phi_k(a) (x) &= x^k - (x-a)^k  \label{eq_phi}\\
\Psi_k(a,b) (x) &= x^k - (x -a)^{k-1} (x-b). \label{eq_psi}
\end{align}
With a slight abuse of notation, we are going to denote the graphs of these polynomials by the same symbols. It will always be clear from the context which meaning do we refer to.

\begin{defi}\label{def_ckab}
For $0 \leq a< b$, the cell $C_k(a,b)$ is defined as follows:
\begin{description}
  \item[\rm For $k=1$] $C_1(a, b) = [a,b]^2$, the square with diagonal vertices $\g_1(a)$ and $\g_1(b)$;
  \item[\rm For $k =2$] $C_2(a, b)$ is the triangle bounded by $\Phi_k(a), \Phi_k(b)$, and $\Psi_k(a,b) =  \Psi_k(b,a)$;
  \item[\rm For $k \geq 3$] $C_k(a, b)$ is the 4-vertex cell bounded by $\Phi_k(a), \Phi_k(b), \Psi_k(a,b)$ and $\Psi_k(b,a)$.
\end{description}
\end{defi}

\begin{figure}[h]
\centering
  \includegraphics[width=0.6\textwidth]{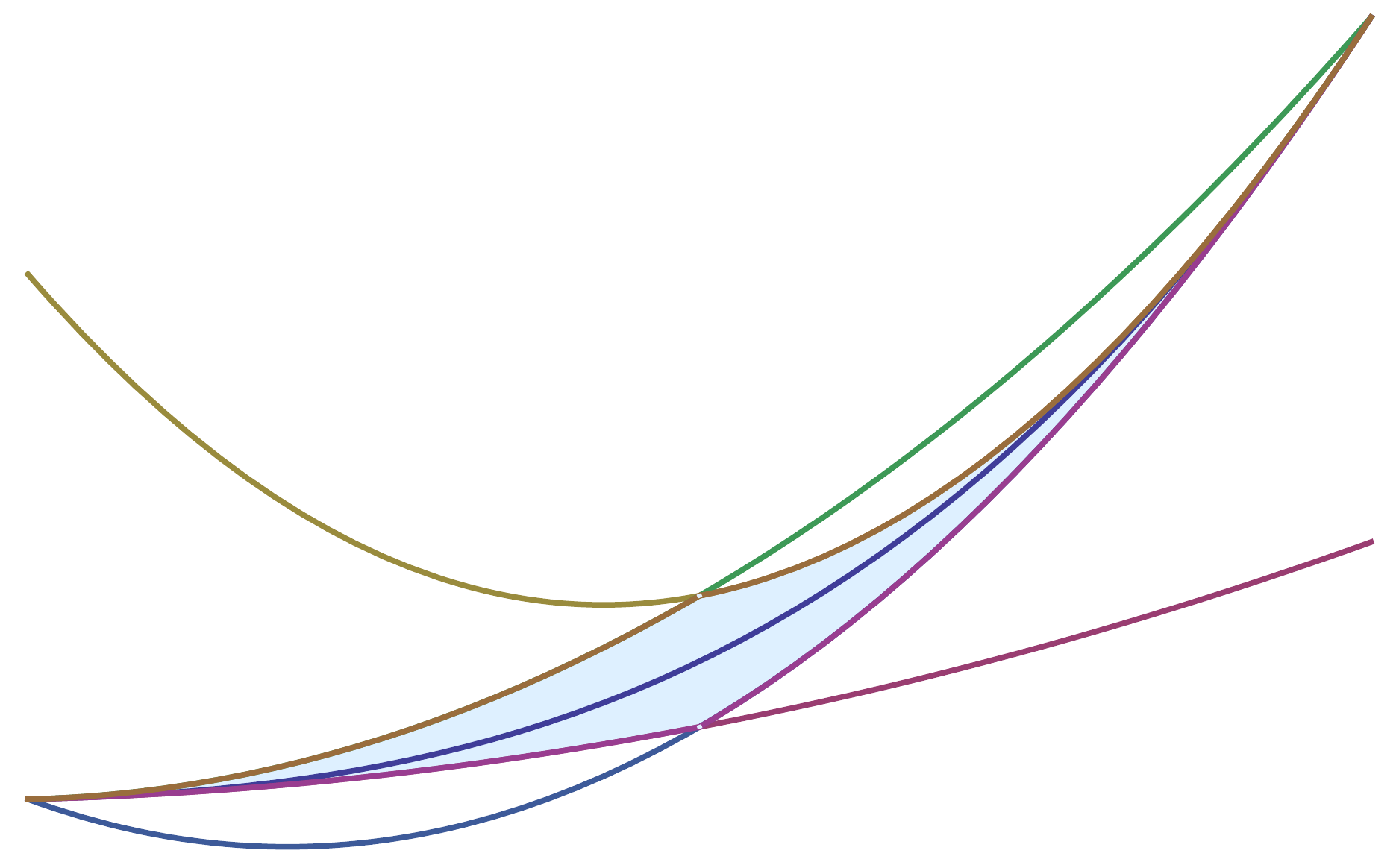}\\
  \caption{The cell $C_k(a,b)$ between $\g_k(a)$ (left endpoint) and $\g_k(b)$ (right endpoint), $k$ odd, shaded with blue.  The curve $\G_k$ runs in the middle of the cell. The lower boundary consists of $\max \{\Phi_k(a),\Psi_k(b,a)\}$, the upper boundary is defined by
$\min \{ \Psi_k(a,b), \Phi_k(b) \}$ .}\label{fig_Cab}
\end{figure}

Let us elaborate on the $k \geq 2 $ case (see Figure~\ref{fig_Cab}) . When $k$ is even, the lower boundary of $C_k(a,b)$ consists of two arcs: $\Phi_k(a)$ for $a \leq x \leq (a+b) /2$, and $\Phi_k(b)$ for $(a+b) /2 \leq x \leq b$. The upper boundary again consists of two arcs: $\Psi_k(a,b)$ for $a \leq x \leq (a+b) /2$, and $\Psi_k(b,a)$ for $(a+b) /2 \leq x \leq b$ (for $k=2$, these coincide with each other). When $k$ is odd, the lower and upper boundaries of $C_k(a,b)$ are the same as in the even case; however, for $(a+b) /2 \leq x \leq b$,  $\Phi_k(b)$ is the upper boundary, while $\Psi_k(b,a)$ is the lower boundary. Thus, in any case, $\g_k(a)$ and $\g_k(b)$ are two opposite vertices of $C_k(a,b)$, and other two vertices both have $x$-coordinates $(a+b)/2$.

The importance of the cell $C_k(a,b)$ is given by the following statement.

\begin{lemma}\label{lemma_cell}
Let $k \geq 1$, and $0 \leq a < b$. The set of points $p$ in the plane for which
\[
(\gamma_k(a)^{\circ k}, p, \gamma_k(b)^{\circ k})
\]
is a $k$-monotone chain is exactly $C_k(a,b)$.
\end{lemma}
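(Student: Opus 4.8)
The plan is to reduce $k$-monotonicity of the multiset chain to a finite list of positivity conditions, translate each into one explicit inequality, and then show that all but the four (or fewer) boundary conditions are redundant.

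First, note that for $(\gamma_k(a)^{\circ k}, p, \gamma_k(b)^{\circ k})$ to be a chain at all we need $p = (s,y)$ with $a \le s \le b$; for $s \notin [a,b]$ the multiset is not a chain and $p \notin C_k(a,b)$, so both descriptions agree trivially, and the two boundary values $s=a,b$ force $p = \gamma_k(a)$ or $\gamma_k(b)$, the extreme vertices of $C_k(a,b)$, which are handled directly. Assume then $s \in (a,b)$. By the transitive property (Lemma~\ref{lemma_transitive}), extended to multisets on the graph of $f(x)=x^k$ via the limiting description of divided differences following \eqref{def_dmultset} (approximate each repeated endpoint by nearby distinct points on $\G_k$, apply transitivity, and pass to the limit, where strict signs persist by $k$-general position), the chain is $k$-monotone if and only if every consecutive $(k+1)$-interval
\[
\CalP_i = (\gamma_k(a)^{\circ(i-1)}, p, \gamma_k(b)^{\circ(k-i+1)}), \qquad i = 1, \ldots, k+1,
\]
is positive.

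Next I apply Lemma~\ref{lemma_multsign} to each $\CalP_i$ with $q = \gamma_k(a)$, $\tilde q = \gamma_k(b)$. The associated polynomial is $\Phi_{i,k}(a,b,f)(x) = x^k - (x-a)^{i-1}(x-b)^{k-i+1}$: the right-hand side has degree $\le k-1$ since the $x^k$ terms cancel, and because the subtracted product vanishes to order $i-1$ at $a$ and order $k-i+1$ at $b$ it matches $f$ to orders $i-2$ and $k-i$ there, so by uniqueness it equals $\Phi_{i,k}$. Specializing $i \in \{k+1,k,2,1\}$ recovers exactly $\Phi_k(a), \Psi_k(a,b), \Psi_k(b,a), \Phi_k(b)$ from \eqref{eq_phi}--\eqref{eq_psi}. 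Writing $h = y - s^k$ for the signed height of $p$ above $\G_k$ and $t = s-a$, $u = b-s$ (so $t,u>0$, $t+u=b-a$), Lemma~\ref{lemma_multsign} turns positivity of $\CalP_i$ into one inequality,
\[
h \le m_i \ \text{ if } k-i \text{ is even}, \qquad h \ge -m_i \ \text{ if } k-i \text{ is odd}, \qquad m_i := t^{i-1}u^{k+1-i},
\]
so that $p$ yields a $k$-monotone chain precisely when $\max_{k-i \text{ odd}}(-m_i) \le h \le \min_{k-i \text{ even}} m_i$.

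The crux is to identify these extrema. Since $m_{i+1}/m_i = t/u$ is independent of $i$, the sequence $(m_i)_{i=1}^{k+1}$ is monotone, hence over any index set its minimum is attained at an extreme index. Each parity class $\{i \equiv k \pmod 2\}$ and $\{i \equiv k+1 \pmod 2\}$ is an arithmetic progression in $\{1,\dots,k+1\}$, and their four extreme indices are exactly $1,2,k,k+1$ (one of $1,2$ lands in each class, while $k$ and $k+1$ are the respective maxima), producing precisely $\Phi_k(b), \Psi_k(b,a), \Psi_k(a,b), \Phi_k(a)$. Thus the intermediate conditions $3 \le i \le k-1$ are redundant: the admissible region is bounded above by the lower envelope of the two upper-class named curves and below by the upper envelope of the two lower-class named curves. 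Comparing $t \lessgtr u$ across the midpoint $s=(a+b)/2$ shows these envelopes are exactly the arcs of Definition~\ref{def_ckab} (for odd $k$ they read $\min\{\Psi_k(a,b),\Phi_k(b)\}$ above and $\max\{\Phi_k(a),\Psi_k(b,a)\}$ below, as in Figure~\ref{fig_Cab}; for even $k$ the roles of $\Phi_k(b)$ and $\Psi_k(b,a)$ swap), so the region is $C_k(a,b)$; the degenerate cases $k=1$ (giving $[a,b]^2$) and $k=2$ (where $\Psi_k(a,b)=\Psi_k(b,a)$ and the cell is a triangle) follow from the same count. The points needing care are the multiset form of transitivity above, the matching of each extreme index to the correct arc on each half of $[a,b]$, and the endpoint $a=0$ where $f^{(k-1)}$ vanishes at $a$; the last is resolved either by continuity in $a$ or, bypassing Lemma~\ref{lemma_multsign} entirely, by the direct identity $\D_k(\CalP_i) = (y-\Phi_{i,k}(s))/((s-a)^{i-1}(s-b)^{k-i+1})$, which needs only $s \ne a,b$ and reproduces the same inequalities.
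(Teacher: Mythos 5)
Your proof is correct and follows essentially the same route as the paper: reduce to positivity of the tuples $(\gamma_k(a)^{\circ(i-1)}, p, \gamma_k(b)^{\circ(k-i+1)})$, identify $\Phi_{i,k}(a,b,x^k)=x^k-(x-a)^{i-1}(x-b)^{k+1-i}$, and take the envelope over the two parity classes via Lemma~\ref{lemma_multsign}. You additionally supply details the paper leaves implicit --- the geometric-ratio argument showing the extremal indices are $1,2,k,k+1$, and the direct divided-difference identity handling $a=0$ in place of the paper's limit argument --- which are welcome but do not change the method.
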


\begin{proof}
We prove the statement first assuming $a>0$. The case $a=0$ may be obtained  by a standard limit argument.

Let $\Phi_{i,k}(a,b,x^k)$ be the polynomial defined by \eqref{def_phidef} for $f(x) = x^k$. By comparing derivatives, we obtain that for every $1 \leq i \leq k+1$,
\begin{equation}\label{eq_phiabk}
 \Phi_{i,k}(a,b,x^k) = x^k - (x-a)^{i-1}(x-b)^{k+1 - i }.
\end{equation}

By  Lemma~\ref{lemma_multsign}, our goal is to determine the intersection of the regions above $\Phi_{i,k}(a,b,x^k)$ for $i = k+1, k-1, k-3, \ldots, 1 +\textrm{mod}(k, 2)$ and the regions below $\Phi_{i,k}(a,b,x^k)$ for $i = k, k-2, k-4, \ldots, 2 -\textrm{mod}(k, 2)$ (see Figure~\ref{fig_Fiab}). Thus, we have to determine
\[
\max_{ \substack { i \equiv k+ 1 (\mod 2)\\ 1 \leq i \leq k+1}}\Phi_{i,k}(a,b,x^k)
\]
and
\[
\min_{ \substack { i \equiv k (\mod 2)\\ 1 \leq i \leq k+1}}\Phi_{i,k}(a,b,x^k)
\]
for every $x \in [a,b]$. By \eqref{eq_phiabk}, we obtain that for $x \in [a, (a+b)/2]$, the above extrema occur when $i = k+1$, and $i =k$, respectively, while for $x \in [(a+b)/2,b]$, the extrema are taken when $i = 1,2$. Therefore, the boundary of $C_k(a,b)$ is constituted by the polynomials of the form \eqref{eq_phi} and \eqref{eq_psi}.
\end{proof}

\begin{figure}[h]
\centering
  \includegraphics[width=0.6\textwidth]{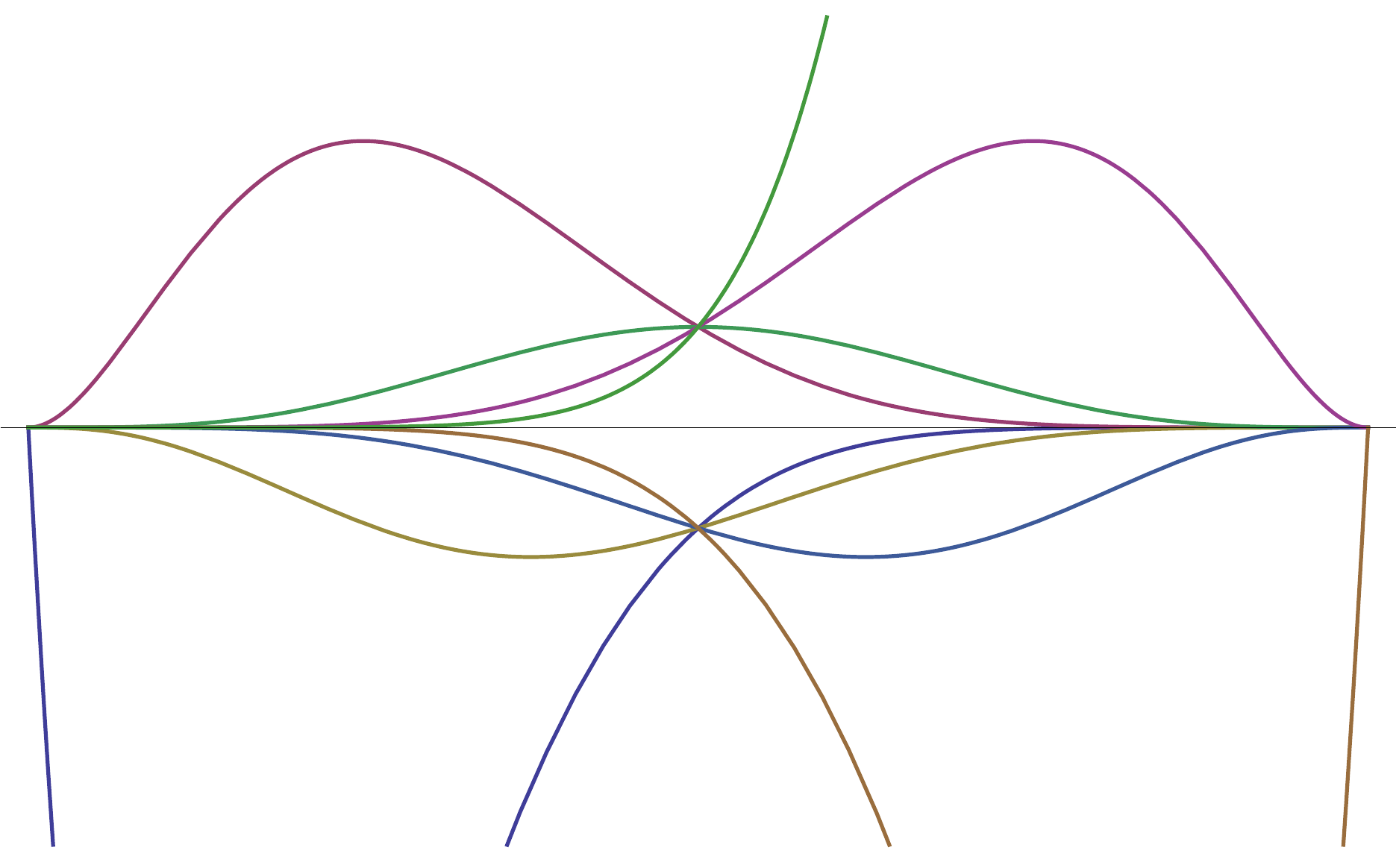}\\
  \caption{Graphs of the polynomials $x^k - \Phi_{i,k}(a,b,x^k)$, $1 \leq i \leq k+1$, plotted between $a$ and $b$, in the $k = 7$ case. }\label{fig_Fiab}
\end{figure}

The next lemma states that not only $C_k(a,b)$ is the location of the $k$-monotone chains, but given two $k$-monotone chains in neighboring cells, we may concatenate them.

\begin{lemma}\label{lemma_concatenate}
  Assume $0 \leq a < b < c$, and that the points $p_1, \ldots, p_l$ and $p_{l+1}, \ldots,  p_m$ are so that $(\gamma_k(a)^{\circ k}, p_1, \ldots, p_l, \gamma_k(b)^{\circ k})$ and $(\gamma_k(b)^{\circ k}, p_{l+1}, \ldots, p_m, \gamma_k(c)^{\circ k})$ are $k$-monotone chains in $C_k(a,b)$ and $C_{b,c}$, respectively. Then
  \[
  (\gamma_k(a)^{\circ k}, p_1, \ldots, p_m, \gamma_k(c)^{\circ k})
  \]
is  a $k$-monotone chain in $C_k(a,c)$.
\end{lemma}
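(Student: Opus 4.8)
The plan is to prove $k$-monotonicity of the concatenated chain
\[
Q = (\gamma_k(a)^{\circ k}, p_1, \ldots, p_m, \gamma_k(c)^{\circ k})
\]
by invoking transitivity (Lemma~\ref{lemma_transitive}) together with the remark made right after it: to certify $k$-monotonicity it suffices to check that every \emph{consecutive} $(k+1)$-tuple (interval) of $Q$ is positive. So first I would enumerate the intervals of length $k+1$ in $Q$ and sort them into three groups. The first group consists of intervals lying entirely within the left block $(\gamma_k(a)^{\circ k}, p_1, \ldots, p_l, \gamma_k(b)^{\circ k})$; the third group consists of intervals lying entirely within the right block $(\gamma_k(b)^{\circ k}, p_{l+1}, \ldots, p_m, \gamma_k(c)^{\circ k})$. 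By hypothesis these two blocks are $k$-monotone, so all of their consecutive $(k+1)$-tuples are positive, disposing of groups one and three immediately.

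The real content is the middle group: intervals that straddle the junction, i.e. those that contain points from the tail of the left block, possibly the full copy $\gamma_k(b)^{\circ k}$, and points from the head of the right block. The key observation I would exploit is that $\gamma_k(b)$ appears in $Q$ with multiplicity exactly $k$ (it is the right boundary multiset of the left block and the left boundary multiset of the right block, but in the concatenation it is written once with multiplicity $k$). Hence any straddling consecutive $(k+1)$-tuple of $Q$, having $k+1$ slots, can contain at most $k$ copies of $\gamma_k(b)$ together with at most one genuine sample point, or fewer copies of $\gamma_k(b)$ padded by sample points on one side. I would argue that every such straddling interval is in fact already an interval of one of the two original $k$-monotone chains once we account for the shared $\gamma_k(b)^{\circ k}$ block: a straddling window of width $k+1$ that reaches into the right block must, because $\gamma_k(b)$ has multiplicity $k$, consist of some copies of $\gamma_k(b)$ plus points $p_{l+1}, p_{l+2}, \ldots$ from the right head — and this is precisely a consecutive $(k+1)$-tuple of the right chain (by permutation-invariance of divided differences the copies of $\gamma_k(b)$ may be grouped at the front). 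Symmetrically for windows reaching into the left block. Thus every straddling interval coincides, as a multiset, with a consecutive interval of one of the two given chains, and is therefore positive.

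Finally I would record the location claim: the concatenation lies in $C_k(a,c)$. This follows since $C_k(a,b) \subset C_k(a,c)$ and $C_k(b,c) \subset C_k(a,c)$, which I would deduce from Lemma~\ref{lemma_cell} and the containment \eqref{mkmonotone}, or directly from the explicit boundary description of the cells in terms of $\Phi_k$ and $\Psi_k$; alternatively, once $Q$ is known to be $k$-monotone, each $p_j$ satisfies that $(\gamma_k(a)^{\circ k}, p_j, \gamma_k(c)^{\circ k})$ is positive, which by Lemma~\ref{lemma_cell} places $p_j \in C_k(a,c)$. The step I expect to be the main obstacle is the bookkeeping for the straddling intervals: I must verify carefully that the multiplicity-$k$ block of $\gamma_k(b)$ genuinely forces every width-$(k+1)$ straddling window to be a sub-interval of one original chain, handling the boundary cases where the window captures all $k$ copies of $\gamma_k(b)$ (so it sees at most one sample point on a side) versus only some copies. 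Invoking permutation-invariance of divided differences to reorder the repeated $\gamma_k(b)$ entries, and the transitivity lemma to lift positivity of consecutive tuples to positivity of all $(k+1)$-subsets, is what makes this clean rather than a direct divided-difference computation.
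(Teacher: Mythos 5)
Your proposal is correct and follows essentially the same route as the paper: form the merged chain with the shared block $\gamma_k(b)^{\circ k}$ inserted once, observe that every consecutive $(k+1)$-window of it is a consecutive window of one of the two given chains (since the $k$ copies of $\gamma_k(b)$ cannot be skipped by a window of width $k+1$), and then use transitivity to upgrade to positivity of all $(k+1)$-subsets, which in particular yields $k$-monotonicity after the copies of $\gamma_k(b)$ are discarded — exactly the paper's ``iterated erasure'' step. The only blemish is notational: you define $Q$ without any copy of $\gamma_k(b)$ but then analyse windows containing $\gamma_k(b)$, so the object you actually check is the augmented chain; once that is fixed the argument matches the paper's.
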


\begin{proof}
By the remark following Lemma~\ref{lemma_transitive},
\[
(\gamma_k(a)^{\circ k}, p_1, \ldots, p_l,\gamma_k(b)^{\circ k},p_{l+1}, \ldots,  p_m, \gamma_k(c)^{\circ k})
\]
is a $k$-monotone chain. By the transitivity property provided by Lemma~\ref{lemma_transitive}, we may delete from this chain any point, still maintaining $k$-monotonicity. Therefore,
\[
(\gamma_k(a)^{\circ k}, p_1, \ldots, p_l,\gamma_k(b)^{\circ (k-1)},p_{l+1}, \ldots,  p_m, \gamma_k(c)^{\circ k})
\]
is also $k$-monotone. By iterating the erasure process, we finally erase all copies  of $\gamma_k(b)$, yielding the statement.
\end{proof}

Next, we introduce a transformation mapping $C_k(a,b)$ to $C_k(c,d)$ which preserves $k$-monotonicity, showing the equivalence of the cells $C_k(a,b)$ with respect to problems regarding $k$-monotone chains.

Let $0<a<b$ and $0<c<d$, and define the transformation $T_{a,b,c,d}: \R^2 \rightarrow \R^2$ by
\begin{equation}\label{eq_transfo}
  T_{a,b,c,d}(x,y)= \left(c + (x - a ) \frac {d-c}{b-a}, \left(c + (x-a) \frac {d-c}{b-a} \right)^k + (y - x^k) \left( \frac {d-c}{b-a}\right)^k \right).
\end{equation}

\begin{lemma}\label{lemma_transfo}
For any $0<a<b$ and $0<c<d$, the map $T_{a,b,c,d}$ preserves $k$-monotonicity, keeps $\Gamma_k$ fixed, and maps the uniform distribution on $C_k(a,b)$ onto the uniform distribution on $C_k(c,d)$.
\end{lemma}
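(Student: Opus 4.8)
The plan is to prove the three assertions---invariance of $\G_k$, preservation of $k$-monotonicity, and the push-forward of the uniform measure---by reducing everything to the behaviour of the $k$-th divided difference under $T_{a,b,c,d}$. Write $\lambda = \frac{d-c}{b-a}>0$ and $\beta = c - a\lambda$, so that the first coordinate of $T_{a,b,c,d}$ is the affine bijection $x \mapsto u = \lambda x + \beta$, sending $a \mapsto c$ and $b \mapsto d$. A direct substitution into \eqref{eq_transfo} shows $T_{a,b,c,d}(\g_k(x)) = \g_k(\lambda x + \beta)$; in particular $\G_k$ is mapped onto itself and $\g_k(a), \g_k(b)$ go to $\g_k(c), \g_k(d)$, which settles the second assertion and identifies the boundary data. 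Conceptually, conjugating $T_{a,b,c,d}$ by the vertical shear $S(x,y) = (x, y-x^k)$ that flattens $\G_k$ onto the $x$-axis turns it into the genuinely affine map $(x,\eta) \mapsto (\lambda x + \beta, \lambda^k \eta)$; this is the structural reason behind the computations below, though I will carry them out directly.

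The central claim is that $T_{a,b,c,d}$ preserves the $k$-th divided difference exactly: for any $(k+1)$-tuple $p_1, \dots, p_{k+1}$ with distinct $x$-coordinates, $\D_k(T_{a,b,c,d}p_1, \dots, T_{a,b,c,d}p_{k+1}) = \D_k(p_1, \dots, p_{k+1})$. To see this, write $p_i = (x_i, y_i)$ and $T_{a,b,c,d}p_i = (u_i, w_i)$. Eliminating $x_i = (u_i - \beta)/\lambda$ from the second coordinate in \eqref{eq_transfo} gives $w_i = u_i^k - (u_i - \beta)^k + \lambda^k y_i$. The key point is that $u^k - (u-\beta)^k$ is a polynomial in $u$ of degree $k-1$, so subtracting it off leaves the leading (degree-$k$) coefficient of the interpolating polynomial unchanged; equivalently $\D_k$ of the tuple $(u_i, w_i)_i$ equals $\D_k$ of $(u_i, \lambda^k y_i)_i$. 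The remaining points $(u_i, \lambda^k y_i)$ differ from $(x_i, y_i)$ by the $x$-substitution $x \mapsto \lambda x + \beta$ and the positive vertical scaling $y \mapsto \lambda^k y$; by Lemma~\ref{newton_interpolation}, which identifies $\D_k$ with the leading coefficient of the interpolant, these multiply $\D_k$ by $\lambda^{-k}$ and $\lambda^{k}$ respectively, and the two factors cancel. Since positivity of a $(k+1)$-tuple is by definition the nonnegativity of $\D_k$, positivity is preserved. For tuples involving repeated copies of $\g_k(a)$ or $\g_k(b)$ the corresponding statement follows by the limit argument behind \eqref{def_dmultset}: approximate the repeated boundary points by nearby distinct points of $\G_k$, apply the identity just proved, and pass to the limit, using that $T_{a,b,c,d}$ maps $\G_k$ continuously onto $\G_k$. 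Combining this with $T_{a,b,c,d}(\g_k(a)) = \g_k(c)$, $T_{a,b,c,d}(\g_k(b)) = \g_k(d)$ and the remark after Lemma~\ref{lemma_transitive} (it suffices to test consecutive $(k+1)$-intervals), we conclude that $(\g_k(a)^{\circ k}, p_1, \dots, p_m, \g_k(b)^{\circ k})$ is $k$-monotone if and only if its image $(\g_k(c)^{\circ k}, T_{a,b,c,d}p_1, \dots, T_{a,b,c,d}p_m, \g_k(d)^{\circ k})$ is.

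Finally, for the measure statement I would first note that $T_{a,b,c,d}$ is a bijection of $\R^2$ (its first coordinate is an affine bijection and, for fixed $u$, its second coordinate is affine and bijective in $y$). By Lemma~\ref{lemma_cell}, $C_k(a,b)$ is exactly the locus of points $p$ making $(\g_k(a)^{\circ k}, p, \g_k(b)^{\circ k})$ $k$-monotone, and similarly for $C_k(c,d)$; hence the preservation of $k$-monotonicity just established shows that $T_{a,b,c,d}$ restricts to a bijection of $C_k(a,b)$ onto $C_k(c,d)$. The Jacobian matrix of $T_{a,b,c,d}$ is lower triangular with diagonal entries $\partial u / \partial x = \lambda$ and $\partial w/\partial y = \lambda^k$, so its determinant is the constant $\lambda^{k+1} = \left(\frac{d-c}{b-a}\right)^{k+1}$. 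A bijection with constant Jacobian carries the uniform distribution on its domain to the uniform distribution on its image, which completes the proof. I expect the only real obstacle to be the central claim on $\D_k$---in particular, correctly accounting for the degree-$k$ contributions via the cancellation $u^k-(u-\beta)^k$ and the compensating $\lambda^{\pm k}$ factors---together with the routine but slightly delicate limit bookkeeping needed to transfer it to the multiset boundary data.
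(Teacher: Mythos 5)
Your proof is correct, and while its outer skeleton matches the paper's (same three assertions, and you even note the conjugation by the shear $(x,y)\mapsto(x,y-x^k)$ that the paper uses explicitly for the measure statement), the core argument for monotonicity preservation is genuinely different. The paper factors $T_{a,b,c,d}$ as a composition of three maps and tracks the sign of $f^{(k)}$ for witnessing functions of the form $f(x)=x^k+g(x)$, leaning on Definition~\ref{def_positive}; you instead prove the sharper quantitative fact that $T_{a,b,c,d}$ preserves the $k$-th divided difference \emph{exactly}, via the identity $w=u^k-(u-\beta)^k+\lambda^k y$, the observation that $u^k-(u-\beta)^k$ has degree $k-1$, and the cancellation of the $\lambda^{\mp k}$ factors coming from the horizontal substitution and the vertical scaling. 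Your route buys several things: it works directly with the divided-difference characterization of positivity (so there is no need to exhibit or track a witnessing $C^k$ function), it immediately gives the ``if and only if'' needed to conclude $T_{a,b,c,d}(C_k(a,b))=C_k(c,d)$ from Lemma~\ref{lemma_cell}, and the exact invariance of $\D_k$ is a reusable statement stronger than sign preservation. The paper's route, in exchange, gets the cell-onto-cell statement by explicitly computing the images of the four boundary arcs (independent of Lemma~\ref{lemma_cell}) and obtains measure invariance without a Jacobian computation. Your direct Jacobian computation ($\lambda^{k+1}$, constant, consistent with the area formula \eqref{eq_Area}) and the limit argument for the multiset boundary data are both sound and in line with the conventions the paper sets up around \eqref{def_dmultset}.
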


\begin{proof}
Notice that
\[
T_{a,b,c,d}(a + t(b-a), (a + t(b-a))^k + \tilde{y}) = \left(c + t (d-c), (c + t (d-c))^k + \tilde{y} \left( \frac {d-c}{b-a}\right)^k \right).
\]
Thus, it is immediate that $T_{a,b,c,d} (\G_k) = \G_k$ (as this corresponds to the case $\tilde{y} =0$). On  the boundary of $C_k(a,b)$, $\tilde{y}$ is either $\pm(t(b-a))^k$, $\pm((1-t)(b-a))^k$, $\pm t^{k-1}(1 - t)(b-a)^k$, or $\pm t (1 - t)^{k-1}(b-a)^k$, which are mapped to the same expressions with $(d-c)$ in place of $(b-a)$. Therefore, $T_{a,b,c,d}$ maps $C_k(a,b)$ onto $C_k(c,d)$ by mapping the boundary curves to the corresponding ones. Measure invariance is seen by writing
\[
T_{a,b,c,d} = G_k^{-1} \circ A \circ G_k,
\]
where $G_k: (x,y) \mapsto (x, y-x^k)$, and $A$ is an affine map.  Thus, it only remains to check the invariance of $k$-monotonicity under $T_{a,b,c,d}$. To this end, we may write $T_{a,b,c,d}$ as the composition of three maps:
\[
T_{a,b,c,d} =T_{0, d-c, c, d} \circ T_{0, b-a, 0, d-c}\circ T_{a,b,0,b-a}.
\]
Assume $f(x)$ has the form $f(x) = x^k + g(x)$, then $f^{(k)}(x) = k! + g^{(k)}(x)$. The first and third of the above maps do keep this derivative fixed, as $g(x)$ is preserved by them.  Finally, the map $ T_{0, b-a, 0, d-c}$ is a linear map scaling the $x$ and $y$ coordinates independently, therefore, it preserves the sign of the derivatives. Therefore, using Definition~\ref{def_positive}, $T_{a,b,c,d}$ preserves positivity of $(k+1)$-tuples, hence it also preserves positivity.
\end{proof}

We conclude this section by calculating the area of the base cell $C_k(a,b)$. By \eqref{eq_phi}, \eqref{eq_psi} and the discussion afterwards,  the distance between the upper and lower boundary of $C_k(a,b)$ is $(x-a)^{k-1}(b-x)$. Therefore,
\begin{align}\label{eq_Area}
\begin{split}
A(C_k(a,b)) &=  \int_a^{(a+b)/2} (x - a)^{k-1} (b - a)dt + \int_{(a+b)/2}^b (b - x)^{k-1} (b - a)dt \\
    &= \frac {(b - a)^{k+1}}{k \, 2^{k-1}}.
\end{split}
\end{align}

\section{The Poisson model}

In order to make the problem more approachable, in this section we switch to the {\em Poisson model}, that has by now became an industry standard (see e.g. \cite{R14}).

Let $\Pi$ be a planar homogeneous Poisson process  with intensity 1. Given any domain $D$ of area $A(D)$ in the plane, the number of points of $\Pi$ in $D$ has Poisson distribution with parameter $A(D)$. That is, its probability mass function is given by
\begin{equation}\label{poissonmassfn}
  \mathbb{P} (|D \cap Pi| = k) = \frac{A(D)^k e^{- A(D)}}{k!} \,,
\end{equation}
and its expectation is $A(D)$.

We are going to use the following standard tail estimate for Poisson random variables, see Proposition 1 of~\cite{G87}. Assume that $X$ has Poisson distribution with parameter $\lambda$. Then
\begin{equation}\label{poissontail}
  \P (X \geq m) \leq \frac{m+1}{m+1 - \lambda}\, \P(X = k) = \frac{m+1}{m+1 - \lambda} e^{- \lambda} \, \frac{\lambda^m}{m!} \,.
\end{equation}

For arbitrary $0 \leq a<b$, let $N^k_\Pi(a,b)$ be the cardinality of $\Pi \cap C_k(a,b)$. By \eqref{eq_Area} and \eqref{poissonmassfn}, $N^k_\Pi(a,b)$ has a Poisson distribution with parameter (and mean) $(b - a)^{k+1} / (k \, 2^{k-1})$.
Moreover, conditioning on the event $N^k_\Pi(a,b) = N$, the joint distribution of the points of $\Pi$ falling in $C_k(a,b)$ is the same as the joint distribution of $N$ i.i.d. uniform points in $C_k(a,b)$.

As the analogue of Definition~\ref{def_mkxn}, we introduce

\begin{defi}\label{def_mkpi}
   Let $\M^k _\Pi(a,b)$ be the set of all chains $(p_1, \ldots, p_m) \subset \Pi \cap C_k(a,b) $ so that
\begin{equation}\label{gkab}
(\gamma_k(a)^{\circ k}, p_1, p_2, \ldots, p_m, \gamma_k(b)^{\circ k})
\end{equation}
is a $k$-monotone chain. Furthermore, let $L^k(a,b)$ denote the maximal cardinality of elements of $\M^k_\Pi(a,b)$.
\end{defi}
By Lemma~\ref{lemma_transfo} and the invariance property of the Poisson process, the distribution of $L^k(a, b)$ depends solely on $(b -a)$; therefore, the results below involving $L^k(0,n)$ remain also valid for the general variables $L^k(a, b)$.

Next, we establish the link between the Poisson and the uniform models. By \eqref{eq_Area}, the area of $C_k(0,n)$ is $n^{k+1}/ (k 2^{k-1})$, therefore, $N^k_\Pi(0,n)$ has a Poisson distribution with parameter $n^{k+1}/ (k 2^{k-1})$. On the other hand, let us denote by $N^k_n$ the number of points of $X_n$ in $C_k(0,1)$. Then, $N^k_n$ has binomial distribution with parameters $n$ and $1/(k 2^{k-1})$, and its mean is $n / (k 2^{k-1})$. Standard Chernoff type concentration estimates for binomial and Poisson random variables (see e.g. Chapter 2 of \cite{BLM13}) yield the following quantitative bound.

\begin{prop}\label{poissonuniform} For any $k\geq 1$, and for any $c>0$,
  \[
  \P\left( | N^k_\Pi(0,n) - N^k_{n^{k+1}}| > c \, \sqrt{\frac{n^{k+1}}{k 2^{k-1}}}\right) < 4 e^{-c^2 /3} \]
holds for every sufficiently large $n$.
\end{prop}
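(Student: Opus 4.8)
The plan is to read this as a de\-Poissonization estimate and exploit that the two counts share the same mean. Writing $\lambda := n^{k+1}/(k\,2^{k-1})$, we have $N^k_\Pi(0,n)\sim\mathrm{Poisson}(\lambda)$ and $N^k_{n^{k+1}}\sim\mathrm{Bin}\bigl(n^{k+1},1/(k\,2^{k-1})\bigr)$, both with expectation $\lambda$, so it suffices to control how far each count strays from $\lambda$. First I would reduce to one\-sided deviations by the triangle inequality: the event $\{|N^k_\Pi(0,n)-N^k_{n^{k+1}}|>c\sqrt\lambda\}$ is contained in $\{|N^k_\Pi(0,n)-\lambda|>\tfrac c2\sqrt\lambda\}\cup\{|N^k_{n^{k+1}}-\lambda|>\tfrac c2\sqrt\lambda\}$, leaving four one\-sided tails (two variables times two directions), which is the natural source of the prefactor $4$.

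The deviation scale is $c\sqrt\lambda=o(\lambda)$, i.e.\ the relative deviation $\delta=c/\sqrt\lambda$ tends to $0$, so for every large $n$ we sit safely inside the regime of the multiplicative Chernoff bounds of \cite{BLM13}, Ch.~2:
\[
\PP\bigl(X\ge(1+\delta)\lambda\bigr)\le e^{-\delta^2\lambda/3},\qquad \PP\bigl(X\le(1-\delta)\lambda\bigr)\le e^{-\delta^2\lambda/2},
\]
valid both for the Poisson variable and, since its variance $\lambda(1-p)$ is even smaller, for the binomial one; thus the binomial tails are never worse than the Poisson ones. The upper\-tail constant $\tfrac13$ is precisely what is meant to surface in the claimed exponent, and $e^{-\delta^2\lambda/2}\le e^{-\delta^2\lambda/3}$ lets me fold the lower tails into the same constant.

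The hard part, and the only genuinely delicate point, is the constant in the exponent. The triangle\-inequality split halves the budget that each variable may spend, so substituting $\delta=c/(2\sqrt\lambda)$ into the bounds above produces the exponent $c^2/12$, not the sharp $c^2/3$; the crude union bound is lossy by a fixed factor in the exponent. To recover the stated constant I would avoid the split and instead couple the two models on one probability space by Poissonization: realize the binomial sample from an infinite i.i.d.\ uniform sequence $U_1,U_2,\dots$ and let $M\sim\mathrm{Poisson}(n^{k+1})$, so that $N^k_\Pi(0,n)$ and $N^k_{n^{k+1}}$ become the counts of $\{U_i\in C_k(0,1)\}$ over the first $M$ and the first $n^{k+1}$ indices, respectively. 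Their difference is then at most $\mathrm{Bin}\bigl(|M-n^{k+1}|,p\bigr)$ conditionally, with $p=1/(k\,2^{k-1})$, so the whole quantity is governed by a single Poisson deviation of $M$ about $n^{k+1}$ together with a binomial thinning estimate — again the Poisson and binomial Chernoff bounds, but now applied at full scale. This yields at least the asserted bound (in fact a stronger one, since under the coupling $|N^k_\Pi(0,n)-N^k_{n^{k+1}}|$ typically lives on the smaller scale $\sqrt{p\lambda}$), and the only remaining effort is routine bookkeeping of these constants.
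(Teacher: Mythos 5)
Your proposal is correct in substance and is in fact more careful than the paper, which disposes of this proposition in a single sentence by appealing to ``standard Chernoff type concentration estimates'' from \cite{BLM13} --- i.e.\ exactly your first argument: bound the deviation of each count from the common mean $\lambda = n^{k+1}/(k\,2^{k-1})$ and union over the four one-sided tails. You are right that this route, after halving the budget $c\sqrt{\lambda}$, only yields $4e^{-c^2/12}$; the paper glosses over this loss. (For the way the proposition is actually used --- with $c=\eps n^{(k+1)/2}$, where any fixed positive constant in the exponent suffices --- nothing downstream is affected.) Your coupling argument is a genuinely different route and is the one that actually delivers the stated constant: realizing both counts from one i.i.d.\ uniform sequence together with an independent $M\sim\mathrm{Poisson}(n^{k+1})$ makes $|N^k_\Pi(0,n)-N^k_{n^{k+1}}|$ conditionally a $\mathrm{Bin}(|M-n^{k+1}|,p)$ variable with $p=1/(k\,2^{k-1})$, and splitting the threshold between the deviation of $M$ and the binomial thinning gives a tail of order $2e^{-c^2/(2p)}$ plus a term that is $o(1)$ in $n$, which beats $4e^{-c^2/3}$ once $n$ is large (the case $k=1$, $p=1$, degenerates to a single Poisson deviation and is immediate). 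One point worth making explicit, which neither you nor the paper states: the proposition only makes sense once a joint distribution of the two counts is fixed, and the constant $1/3$ is \emph{not} coupling-free --- for independent copies the difference has variance about $(2-p)\lambda$, so its tail at $c\sqrt{\lambda}$ behaves like $e^{-c^2/(2(2-p))}$ and exceeds $4e^{-c^2/3}$ for large fixed $c$. Thus your coupling is not merely a device for sharpening constants; either it or the weaker exponent $c^2/12$ is needed for the statement to be literally true.
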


This also implies that the random variable $L^k(0,n)$  is a good approximation of $L^k_{n^{k+1}}$. Applying Proposition~\ref{poissonuniform} with $c = \eps n^{(k+1)/2}$ allows us to transfer the statement of Theorem~\ref{thm_expectation} to the Poisson model. We are going to prove the following theorem, which readily implies Theorem~\ref{thm_expectation}.

\begin{theorem}\label{thm_expectation_poisson}
For any $k \geq 1$,  there exists a positive constant $\alpha_k$ so that
\begin{equation}\label{elk0nas}
  \lim_{n \rightarrow \infty}  n^{-1} \, \EE L^k(0,n) = \alpha_k \,.
\end{equation}

Furthermore, $ n^{-1} \, L^k(0,n) \rightarrow \alpha_k$ almost surely, as $n \rightarrow \infty$.
\end{theorem}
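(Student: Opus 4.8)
The backbone of the argument is superadditivity combined with a subadditive ergodic theorem. First I would record the superadditive structure of the chain lengths. Fix $0 \le s < s+t$ and split at the intermediate abscissa: by Lemma~\ref{lemma_concatenate}, concatenating a longest chain of $\M^k_\Pi(0,s)$ with a longest chain of $\M^k_\Pi(s,s+t)$ produces a valid element of $\M^k_\Pi(0,s+t)$, whence
\[
L^k(0,s+t) \ge L^k(0,s) + L^k(s,s+t)
\]
holds almost surely. The two summands on the right are measurable with respect to $\Pi$ restricted to the disjoint cells $C_k(0,s)$ and $C_k(s,s+t)$, hence independent, and by Lemma~\ref{lemma_transfo} together with the invariance of $\Pi$ one has $L^k(s,s+t) \stackrel{d}{=} L^k(0,t)$. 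Writing $g(s) = \EE L^k(0,s)$ and taking expectations gives the superadditivity $g(s+t) \ge g(s) + g(t)$.

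Next I would set up the stationary structure. The key observation is that when the two widths agree, the map $T_{s,t,s+1,t+1}$ of Lemma~\ref{lemma_transfo} has scaling factor $(t+1-(s+1))/(t-s)=1$, so in the sheared coordinates $G_k(x,y)=(x,y-x^k)$ it is simply the horizontal unit translation $(x,\tilde y)\mapsto(x+1,\tilde y)$. Since $G_k$ is area preserving, this map transports the law of the unit-intensity process $\Pi$ to itself, and therefore defines a measure-preserving, ergodic (in fact mixing) shift $\theta$ with $L^k(m,n)=L^k(0,n-m)\circ\theta^m$ for integers $m<n$. The variables are integrable, as $L^k(0,1)$ is dominated by the Poisson count $N^k_\Pi(0,1)$. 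Provided $\sup_n g(n)/n<\infty$, Kingman's theorem (in its superadditive form, applied to $-L^k$) then yields that $n^{-1}L^k(0,n)$ converges both almost surely and in $L^1$ to the constant $\alpha_k=\lim_n g(n)/n=\sup_n g(n)/n$, which is exactly the two assertions for integer $n$; passage to real $n$ follows from the sandwich $L^k(0,\lfloor n\rfloor)\le L^k(0,n)\le L^k(0,\lceil n\rceil)$ coming from superadditivity and nonnegativity. Positivity of $\alpha_k$ is immediate: any single point of $C_k(0,1)$ forms a valid chain by Lemma~\ref{lemma_cell}, so $g(1)>0$ and hence $\alpha_k\ge g(1)>0$, the sharper bound $\alpha_k\ge 1/6$ being Proposition~\ref{prop_lowerbound}.

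The one missing ingredient, and the main obstacle, is the finiteness of $\alpha_k$, i.e. the linear upper bound $g(n)\le C_k\,n$. Superadditivity only bounds $\alpha_k$ from below, and there is no matching subadditivity, since the restriction to $x\in[0,s]$ of a chain in $C_k(0,n)$ need neither lie in $C_k(0,s)$ nor meet the boundary condition at $\gamma_k(s)$. I would therefore obtain the upper bound by a first-moment computation. By the Mecke formula, the expected number of $k$-monotone chains of length $m$ equals the integral $I_m$ of the indicator of \eqref{multsetchain} over $C_k(0,n)^m$ against Lebesgue measure, and since every chain of length $\ge m$ contains one of length exactly $m$ (transitivity, Lemma~\ref{lemma_transitive}), we get $\PP(L^k(0,n)\ge m)\le I_m$, so that $g(n)=\sum_{m\ge 1}\PP(L^k(0,n)\ge m)\le C n+\sum_{m>Cn}I_m$. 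It then suffices to show that $I_m$ decays factorially fast once $m$ exceeds a fixed multiple of $n$. To estimate $I_m$ one exploits the geometric content of Lemma~\ref{lemma_cell}: with the earlier points fixed, positivity of the next consecutive $(k+1)$-tuple confines the next point to a region of controlled area, so the iterated integral factorizes into a product of conditional areas whose total, after incorporating the constraint that the $k$-th divided differences increase monotonically from $0$ to $1$ along the chain, becomes summable precisely when the number of points is of order $n$. Carrying out this estimate is the technical crux; once $\sup_n g(n)/n<\infty$ is established, the ergodic argument above closes the theorem.
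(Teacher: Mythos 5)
Your overall architecture coincides with the paper's: the paper also verifies conditions S1)--S3) of Kingman's theorem (Theorem~\ref{thm_kingman}) for $X_{m,n}:=-L^k(n,m)$, with S1) coming from Lemma~\ref{lemma_concatenate}, S2) from Lemma~\ref{lemma_transfo} and the invariance of $\Pi$, and almost sure convergence to the constant from the independence (hence ergodicity) of $(L^k(in,(i+1)n))_i$. Your superadditivity, stationarity and positivity remarks are all correct and match the paper.

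The genuine gap is the one you yourself flag: the linear upper bound $\EE L^k(0,n)\leq c_k n$ (condition S3), equivalently finiteness of $\alpha_k$). This is not a routine verification --- it is the content of Lemma~\ref{lemma_expsmall}, whose proof occupies most of the relevant section, and your proposal does not carry it out. Moreover, the route you sketch (a first-moment/Mecke computation of the expected number of $k$-monotone $m$-chains, factorizing the iterated integral into conditional areas) is essentially the Valtr--B\'ar\'any--Rote--Steiger--Zhang strategy that works for $k=2$, and the paper explicitly states that this approach breaks down for $k\geq 3$: the conditional region for the next point depends on the entire history of the chain through the divided differences, and no exact or even usable product formula is available. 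The paper circumvents this with a different idea: it conditions on the \emph{skeleton} of a hypothetical long chain (every $\lfloor m/n\rfloor$-th point), shows via Lemma~\ref{lemma_sign} and the Newton interpolation formula that each non-skeleton point is confined to a region $R_i$ of area at most $\bigl(x(s_{i+1})-x(s_{i-k+2})\bigr)^k D_i$, and then uses two pigeonhole counts (on the gaps $x(s_{i+1})-x(s_{i-k+2})$, which sum to at most $kn$, and on the monotone divided differences $D_i$, which telescope to $kn$) to find at least $n/3$ regions of area at most $(3k)^{k+1}$; each must contain at least $C$ Poisson points, giving probability $\eps^{n/3}=o(n^{-k})$ uniformly in the skeleton. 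Without this (or a genuinely worked-out substitute), your argument establishes only that $n^{-1}L^k(0,n)$ converges to some $\alpha_k\in(0,\infty]$, not the theorem as stated.
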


First, we need an upper bound on $\E L^k(0,n)$. In the uniform model for $k=2$, this is fairly easy to establish. The probability that a random chain of length $n$ is convex may be calculated exactly~\cite{BRSZ00}, based on a beautiful argument of Valtr~\cite{V95}. The probability that $n$ uniform independent random points in the unit square form a convex chain is exactly
\[
\frac{1}{n! (n+1)!}\,.
\]
The calculation is based on rearranging convex chains while keeping the underlying probability space invariant. Unfortunately, this approach brakes down for larger values of $k$, and thus, such a sharp result does not hold in the more general setting. However, we may still prove that the probability of the existence of very long $k$-monotone chains is minuscule.

\begin{lemma}\label{lemma_expsmall}
  For every $k \geq 1$ there exists a constant $c_k$ so that
  \begin{equation}\label{elkckn}
       \E L^k(0,n) < c_k n
  \end{equation}
holds for every $n \geq 1$.
\end{lemma}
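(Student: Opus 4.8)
The plan is to bound the expectation via a union bound over all possible chains of a given length, exploiting the fact that the cell $C_k(0,n)$ has area growing only linearly in $n^{k+1}$ (namely $n^{k+1}/(k\,2^{k-1})$), so the Poisson process has on average a linear-in-$n^{k+1}$ number of points there. The key observation is that the probability that $m$ specified points form a $k$-monotone chain is extremely small, and this smallness must beat the binomial/combinatorial factor counting the choices of $m$ points. To make this precise I would work in the Poisson model and write $\E L^k(0,n) = \sum_{m\geq 1} \P(L^k(0,n) \geq m)$, then estimate each tail probability $\P(L^k(0,n)\geq m)$ by summing, over all $m$-subsets of $\Pi\cap C_k(0,n)$, the probability that a given $m$-subset forms a $k$-monotone chain satisfying the boundary conditions.

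The central estimate is a \emph{volume} bound on the probability that $m$ i.i.d.\ uniform points in $C_k(0,n)$, taken in increasing order of $x$-coordinate, form a $k$-monotone chain. Here I would use Lemma \ref{lemma_cell} crucially: once the first $m-1$ points $p_1,\ldots,p_{m-1}$ are fixed and $k$-monotone, the region in which the next point $p_j$ may lie so that $k$-monotonicity is preserved is contained in a cell of the form $C_k(x_{j-1}, n)$ (or, more sharply, is constrained by the cell structure between consecutive points). By the transitivity remark following Lemma \ref{lemma_transitive}, checking $k$-monotonicity reduces to checking positivity of consecutive $(k+1)$-tuples, so each new point is confined to a region whose area shrinks geometrically; iterating over the $m$ points should produce a bound of the form $\P(\text{chain is }k\text{-monotone}) \leq C^m / (m!)^{k+1}$ or similar, where the $(m!)^{k+1}$ comes from the nested integration over the $k+1$ successive ``derivative'' constraints encoded in the divided differences starting from $0$ at $\gamma_k(0)$.

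Combining this with the Poisson point count, I expect a bound of the shape
\[
\P(L^k(0,n)\geq m) \leq \frac{(C\, n^{k+1})^m}{(m!)^{k+2}}
\]
(the extra $m!$ from the Poisson/subset normalization), which decays super-exponentially once $m \gtrsim n$. Summing the tail then yields $\E L^k(0,n) < c_k n$ for a suitable constant $c_k$ depending only on $k$.

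The main obstacle I anticipate is obtaining the factor $(m!)^{k+1}$ rather than merely $m!$: a naive ordering argument only gives one factorial, whereas the higher-order monotonicity should force the divided differences of all orders $1,\ldots,k$ to be increasing (as noted in the monotonicity chain \eqref{mkmonotone} following Definition \ref{def_mkxn}). Extracting the full $(k+1)$ factorials requires carefully parametrizing the admissible region for each successive point in terms of the increments of the divided differences, and showing these increments integrate to give nested factorial suppression. This is the analogue of Valtr's exact computation in the $k=2$ convex case, and while I do not expect an exact formula for general $k$, the qualitative super-exponential decay should survive, which is all that is needed for the linear upper bound.
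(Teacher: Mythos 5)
Your proposal reduces the lemma to a first--moment bound: the expected number of $k$-monotone chains of length $m$ in $\Pi\cap C_k(0,n)$ is (by the Mecke formula) $\tfrac{A^m}{m!}p_m$ with $A=A(C_k(0,n))\asymp n^{k+1}$, and you need $p_m$, the probability that $m$ i.i.d.\ uniform points of $C_k(0,n)$ taken in $x$-order form a $k$-monotone chain, to decay like $C^m/(m!)^{k}$ in order to beat $A^m/m!$ at $m\asymp n$. This central estimate is exactly the point you do not prove, and it is not a technicality: the paper states explicitly that the Valtr-type rearrangement which yields $p_m=1/(m!\,(m+1)!)$ for $k=2$ ``breaks down for larger values of $k$,'' and the author's proof is structured precisely to avoid ever needing a per-chain probability bound. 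Your heuristic for where the $(m!)^{k}$ should come from --- that each successive point is confined to a region ``whose area shrinks geometrically'' --- is also not correct as stated: given a $k$-monotone prefix, the admissible region for the next point (a cell of the form $C_k(x_{j-1},n)$, or the region cut out by the two interpolating polynomials through the last $k$ points) need not shrink at all; the factorial suppression in the $k=1,2$ cases comes from global volume identities (order statistics, respectively Valtr's measure-preserving rearrangement), not from pointwise geometric decay. So as written the argument has a genuine gap at its core.

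For comparison, the paper sidesteps the per-chain estimate entirely. It fixes $C$ large, conditions on the \emph{skeleton} of a hypothetical chain of length $\geq Cn$ (every $\lfloor m/n\rfloor$-th point, $n-1$ points in all), and observes that each non-skeleton point between $s_i$ and $s_{i+1}$ is trapped in the region $R_i$ between the two degree-$(k-1)$ interpolating polynomials through the neighbouring skeleton points. A pigeonhole argument --- the $x$-increments sum to at most $kn$ and the $(k-1)$-st divided differences increase from $0$ to $kn$ --- shows at least $n/3$ of the $R_i$ have area at most $(3k)^{k+1}$, and each such region must contain at least $C$ Poisson points; independence across disjoint regions gives a conditional probability at most $\eps^{n/3}=o(n^{-k})$, uniformly in the skeleton. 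If you want to rescue your route, you would need to actually establish a bound of the form $p_m\leq C^m/(m!)^{k}$ for general $k$, which is an open-ended task the paper deliberately avoids.
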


\begin{proof}
As the statement is known for the cases $k =1$ and $k=2$, we may assume that $k \geq 3$. Also, it is sufficient to prove $\eqref{elkckn}$ for sufficiently large values of $n$. Let $C$ be a constant whose value we are going to specify later. Set $N = n^{k+1}$. As we noted before, $N^k_\Pi(0,n)$ has Poisson distribution with parameter $n^{k+1} / (k 2^{k-1})$. Therefore, using~\eqref{poissontail},

%Therefore, by \eqref{poissontail},
%\begin{equation}
%\P(N^k(0,n) > N)
%\end{equation}

\begin{align}\label{elkn}
\begin{split}
\E L^k(0,n) &= \int_0^\infty \P(L^k(0,n) \geq x) dx \\
&\leq C n + \int_{Cn}^N \P(L^k(0,n) \geq x) dx + \int_N^\infty \P(L^k(0,n) \geq x)dx \\
& \leq C n + N \, \P (L^k(0,n) \geq C n )  + \int_N^\infty \P(N^k_\Pi(0,n) \geq x)dx \\
& \leq C n +  N \, \P (L^k(0,n) \geq C n ) + 2 \sum_{ i =N}^\infty \P (N^k_\Pi (0,n) = i)\\
& \leq C n +  N \, \P (L^k(0,n) \geq C n ) + \frac {1}{4^N}\,.
\end{split}
\end{align}
Thus, it suffices to show that for suitably large $C$ (depending on $k$ only),
\[
\P (L^k(0,n) \geq C n ) = o(n^{-k}).
\]

Call a $k$-monotone chain in $\Pi \cap C_k(0,n)$ {\em long} if its cardinality is at least $C n$. To every such long $k$-monotone chain $\C$ we assign its {\em skeleton} as follows. Assume that $ \C = \{p_1, \ldots, p_m\}$ (with the points ordered according to their $x$-coordinates), where $m \geq Cn $. Then
\begin{align}\label{def_skeleton}
\begin{split}
\skel(\C) &= \{ \gamma_k(0)^{\circ k }, p_{\lfloor \frac m n \rfloor}, p_{2 \lfloor \frac m  n \rfloor}, \ldots, p_{n-1 \lfloor \frac m  n \rfloor} ,\gamma_k(n)^{\circ k } \} \\
&=: \{\gamma_k(0)^{\circ k }, s_1, \ldots, s_{n-1}, \gamma_k(n)^{\circ k }  \}\,,
\end{split}
\end{align}
that is, $s_i = p_{i \lfloor m / n \rfloor}$ for every $i = 1, \dots, n-1$. Also, set $s_i = \gamma_k(0)$ for $i \leq 0$ and $s_j = \gamma_k(n)$ for $j \geq n$. Any long chain is cut into $n$ intervals of length at least $C$ by its skeleton.

The free part of the skeleton is a chain of length $n-1$ contained in $C_k(0,n)$. The  distribution of the long chains in $ \Pi \cap C_k(0,n)$ induces a probability distribution $\mu$ on the space of skeletons. By the law of total probability,
\[
\P(L^k(0, n) \geq C n) = \int \P (\exists \textrm{ a long }k\textrm{-monotone chain } \C \ | \ \skel(\C) = S) d \mu (S),
\]
where the integral is taken over the space of possible skeleta. Thus, \eqref{elkn}, implies \eqref{elkckn} as long as
\[
\P (\exists \textrm{ a long }k\textrm{-monotone chain } \C \ | \ \skel(\C) =S) < o(n^{-k})
\]
holds true for every possible skeleton $S$, with the constants of the asymptotic estimate being independent of $S$. This is what we are going to prove.

Let us now fix $S$ of the form \eqref{def_skeleton} and assume that $\C$ is a long $k$-monotone chain with $\skel(\C) = S$. Let $p \in \C \setminus \skel(\C)$. For any point $u \in \R^2$, let $x(u)$ denote its $x$-coordinate. There exists a unique index $i$ so that $x(p) \in [x(s_i), x(s_{i+1})]$. Then, by Definition~\ref{def_positive} of $k$-monotone chains, the $(k+1)$-tuples
\[
(s_{i - k +1}, s_{i - k + 2}, \ldots, s_i, p)
\]
and
\[
(s_{i - k +2}, s_{i - k + 3}, \ldots, s_i, p, s_{i+1})
\]
are positive. Let $P_1$ be the unique polynomial of degree $k-1$ whose graph contains the points $s_{i - k +1}, s_{i - k + 2}, \ldots, s_i$, and similarly, let $P_2$ be the unique polynomial of degree $k-1$ whose graph contains the points $s_{i - k +2}, s_{i - k + 3}, \ldots, s_i, s_{i+1}$, possibly using the extended definition for multisets discussed in Section~\ref{sec_convex}. That is, if $\gamma_k(0)$ appears with multiplicity $\beta$ among the nodes for $P_i$ for $i = 1$ or 2, than the derivatives up to order $\beta - 1$ of $P_i$ at $0$ are required to agree with those of $x^k$ at $0$.

Lemma~\ref{lemma_sign} and its generalization to multisets implies that the point $p$ lies in the region $R_i$ bounded by the graphs of the polynomials $P_1$ and $P_2$ over the interval $[x(s_i), x(s_{i+1})]$. Lemma~\ref{newton_interpolation} and formula \eqref{newtonpoly} shows that
\begin{align*}
\begin{split}
P_1(x) - P_2(x) &=  \Big( \D_{k-1}(s_{i - k +1}, s_{i - k + 2}, \ldots, s_i)  \\
 & \quad- \D_{k-1}(s_{i - k +2}, s_{i - k + 3}, \ldots, s_i, s_{i+1}) \Big)  \prod_{ j = 1}^{k-1} (x - x(s_{i +1 - j}))\,.
\end{split}
\end{align*}
Therefore,
\begin{align}\label{Ari}
\begin{split}
A(R_i) &= \int_{x(s_i)}^{x(s_{i+1})} |P_1(x) - P_2(x)| \\
& \leq \big(x(s_{i+1}) - x(s_i)\big)\big(x(s_{i+1}) - x(s_{i - k +2})\big)^{k-1} D_i \\
& \leq \big(x(s_{i+1}) - x(s_{i - k +2})\big)^ k D_i
\end{split}
\end{align}
with
\[
D_i = \D_{k-1}(s_{i - k +2}, s_{i - k + 3}, \ldots, s_i, s_{i+1}) - \D_{k-1}(s_{i - k +1}, s_{i - k + 2}, \ldots, s_i) \,.
\]

Since
\[
\sum_{i = 0}^{n-1}  x(s_{i+1}) - x(s_{i - k +2})
= \sum_{j=0}^{k-2} x(s_{n -j }) - x(s_{-j})  < k n,
\]
there are at least $(2n) /3$ indices $i$ in the interval $[0, n-1]$ so that
\begin{equation}\label{xint3k}
  x(s_{i+1}) - x(s_{i - k +2}) \leq 3k \,.
\end{equation}
On the other hand, the $k$-monotonicity of $\C$ implies that  $(\D_{k-1}(s_{i - k +1}, s_{i - k + 2}, \ldots, s_i)) _{i = 0 }^{n+k-1}$ is a monotone increasing sequence, which, by \eqref{def_dmultset}, satisfies
\[
\D_{k-1}(s_{ - k +1}, s_{ - k + 2}, \ldots, s_0) = 0
\]
and
\[
\D_{k-1}(s_{ n}, s_{ n+1}, \ldots, s_{n+ k -1}) = k n \,.
\]
Thus, there exist at least $(2n) /3$ indices $j$ in the interval $[0, n-1]$ so that
\begin{equation}\label{dj3k}
  \D_{k-1}(s_{j - k +1}, s_{j - k + 2}, \ldots, s_j) \leq 3 k.
\end{equation}
Combining \eqref{xint3k} and \eqref{dj3k} with \eqref{Ari}, we obtain that there at least $n/3$ indices $i \in [0, n-1]$ so that
\[
A(R_i) \leq (3 k)^{k+1}.
\]
In order for $\C$ to be long, each of these regions must contain at least $C$ points of $\Pi$. Pick such a region $R$. By \eqref{poissontail} and Stirling's approximation,
\[
\P\left( |R \cap \Pi| \geq C \right) \leq 2 \left( \frac{e (3k)^{k+1}}{C}\right)^C
\]
holds for any sufficiently large $C$. Therefore, for any given $\eps>0$, there exists a corresponding $C$ so that the above probability is bounded above by $\eps$. For that choice of $C$,
\begin{align*}
\P (\exists \textrm{ a long }k\textrm{-monotone chain } \C \ | \ \skel(\C) = S) &\leq \prod_{i = 0}^{n-1} \P\left( |R_i \cap \Pi| \geq C \right)\\
&\leq \eps^{n/3},
\end{align*}
where the independence property of the Poisson process is used in the first inequality.
The proof is finished by noting that he above expression is of order $o(n^{-k})$ for sufficiently small values of ~$\eps$, and all the above estimates depend on $k$ only.
\end{proof}

\section{Expectation and concentration estimates} \label{sec_expectation}

In this section, we show that the order of magnitude of the length of the longest $k$-monotone chains among $n$ random points is $n^{1/(k+1)}$. We are going to prove this in the Poisson model, Theorem~\ref{thm_expectation_poisson}, which implies  Theorem~\ref{thm_expectation} of the uniform model. The proof builds on Kingman's subbaditive ergodic theorem. Below, we present a version of it along with an important extension by Liggett.

\begin{theorem}[Kingman's subadditive ergodic theorem with Liggett's extension \cite{K73, L85}] \label{thm_kingman}
Assume $X_{n,m}$, $n,m \in \N$, is a family of random variables satisfying the following conditions:
\begin{itemize}
\item[S1)] $X_{l,n} \leq X_{l,m} + X_{m,n}$ whenever $0 \leq l < m < n$;
\item[S2)] For every $s \geq 0$ integer, the joint distributions of the process $\{X_{m+s,n+s}\}$ are the same as those of  $\{X_{m,n}\}$;
\item[S3)] For each $n$, $\E |X_{0,n}| < \infty$ and $\E X_{0,n} > -c n$ for some constant $c$.
\end{itemize}
Then
$$
\gamma = \lim_{n \rightarrow \infty}\frac {\E X_{0,n}}{n}
$$
exists,
$$
X = \lim_{n \rightarrow \infty}\frac {X_{0,n}}{n}
$$
exists almost surely, and
$
\E X = \gamma.
$

Furthermore, if the stationary sequences $(X_{in, (i+1)m})_{i=1}^\infty$ are ergodic for any $m \geq 1$, then $X = \gamma$ almost surely.
\end{theorem}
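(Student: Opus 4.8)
The plan is to prove the result in three stages: the deterministic convergence of the means, an upper bound for the almost-sure $\limsup$, and the matching lower bound for the $\liminf$, the last being the crux. First I would dispose of the deterministic part. By S1 together with the stationarity S2 (which gives $\E X_{m,n} = \E X_{0,n-m}$), the sequence $a_n := \E X_{0,n}$ is subadditive, $a_n \le a_m + a_{n-m}$, so Fekete's lemma yields $a_n/n \to \inf_m a_m/m =: \gamma$, finite by the lower bound in S3. I would then fix a realization of the family $\{X_{m,n}\}$ on its canonical sequence space, where S2 furnishes a measure-preserving shift $\theta$ with $X_{m+1,n+1} = X_{m,n}\circ\theta$; this is what makes Birkhoff's pointwise ergodic theorem available despite the weak hypotheses.

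Write $\bar X := \limsup_n X_{0,n}/n$ and $\underline X := \liminf_n X_{0,n}/n$. For the upper bound I would fix $m$ and iterate S1 to get $X_{0,qm} \le \sum_{i=0}^{q-1} X_{im,(i+1)m}$. The increments $Y^{(m)}_i := X_{im,(i+1)m}$ form a stationary sequence by S2, so Birkhoff gives $\frac1q\sum_{i<q} Y^{(m)}_i \to \E[Y^{(m)}_0 \mid \mathcal I_m]$ almost surely, with $\E Y^{(m)}_0 = a_m$. Interpolating over the residues via $X_{0,n} \le X_{0,qm} + X_{qm,n}$ and noting that for each fixed residue the boundary increments are identically distributed and integrable (S3), a Borel--Cantelli argument shows the boundary term is $o(n)$ almost surely. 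Hence $\bar X \le \E[X_{0,m}\mid\mathcal I_m]/m$ a.s.; taking expectations and then the infimum over $m$ gives $\E \bar X \le \gamma$.

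The heart of the matter, and what I expect to be the main obstacle, is the reverse bound $\E \underline X \ge \gamma$: subadditivity only controls $X_{0,n}$ from above, so there is no direct lower bound on the process itself, and under S2 (rather than full stationarity of the doubly-indexed array) the classical argument must be reworked, which is precisely Liggett's contribution. I would proceed by truncation, replacing $X_{m,n}$ by $X^{(M)}_{m,n} := X_{m,n}\vee(-M(n-m))$; one checks this is again subadditive and now bounded below, so all expectations and the invariance manipulations below are legitimate, while $\E X_{0,n}\ge -cn$ controls the truncation defect and lets one recover $\gamma$ as $M\to\infty$. On the truncated (hence integrable) process, $\bar X\le\bar X\circ\theta$ follows from $X_{0,n}\le X_{0,1}+X_{1,n}$, and measure-preservation of $\theta$ upgrades this to $\bar X = \bar X\circ\theta$, so $\bar X$ and $\underline X$ are shift-invariant; analysing the increment process together with this invariance yields $\E\underline X \ge \gamma$. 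The delicate bookkeeping is showing that the boundary corrections are genuinely negligible almost surely and that the truncation limit is exact. Granting this, since $\underline X \le \bar X$ pointwise the chain $\gamma \le \E\underline X \le \E\bar X \le \gamma$ collapses, forcing $\E\bar X = \E\underline X = \gamma$ and $\E(\bar X - \underline X) = 0$ with $\bar X - \underline X \ge 0$; thus $X := \lim_n X_{0,n}/n$ exists almost surely and $\E X = \gamma$.

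Finally, the ergodic refinement is a short corollary. If each increment sequence $(X_{im,(i+1)m})_i$ is ergodic, then the invariant field $\mathcal I_m$ is trivial, so $\E[X_{0,m}\mid\mathcal I_m] = a_m$ is deterministic and the upper bound sharpens to the pointwise estimate $\bar X \le a_m/m$; letting $m$ run over the infimum gives $\bar X \le \gamma$ almost surely. Combined with $\underline X \le \bar X \le \gamma$ and $\E\underline X = \gamma$, the nonnegative variable $\gamma - \underline X$ has zero expectation, whence $\underline X = \bar X = \gamma$ almost surely, which is the asserted $X = \gamma$.
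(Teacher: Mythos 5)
The paper does not prove this theorem at all: it is quoted as a known result, with the proof deferred to Kingman \cite{K73} and Liggett \cite{L85}, and is then used as a black box in the proof of Theorem~\ref{thm_expectation_poisson}. So your proposal can only be judged against the standard literature proof (Liggett's argument, in the form presented in modern textbooks). Against that benchmark, three of your four stages are correct and essentially the canonical ones: Fekete's lemma for $a_n=\E X_{0,n}$ (S2 with $s=m$ gives $\E X_{m,n}=\E X_{0,n-m}$, so $a_n$ is subadditive and $a_n/n\to\inf_m a_m/m=:\gamma$, finite by S3); the upper bound via Birkhoff applied to the stationary sequence $Y^{(m)}_i=X_{im,(i+1)m}$ realized on its canonical sequence space (no ergodicity needed), with the residue terms handled exactly as you say, since $\sum_q \P\bigl(|X_{qm,qm+r}|>\eps q\bigr)\le \E|X_{0,r}|/\eps+1<\infty$ for each of the finitely many residues $r$, so Borel--Cantelli gives $X_{qm,n}=o(n)$ a.s.; and the ergodic refinement, where triviality of the invariant field of each increment sequence turns $\E[X_{0,m}\mid\mathcal{I}_m]$ into the constant $a_m$, giving the pointwise bound $\overline{X}\le a_m/m$ and hence $\overline{X}\le\gamma$ a.s.

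The genuine gap is the lower bound $\E\,\underline{X}\ge\gamma$, which you correctly identify as the crux and then do not actually prove. Your preparatory steps are sound: $X^{(M)}_{m,n}:=X_{m,n}\vee(-M(n-m))$ is again subadditive (both $X_{l,n}$ and $-M(n-l)=-M(m-l)-M(n-m)$ are dominated by the truncated sum), it satisfies S2, one has $\liminf_n X^{(M)}_{0,n}/n=\underline{X}\vee(-M)$, and the shift-invariance $\underline{X}=\underline{X}\circ\theta$ does follow from $X_{0,n}\le X_{0,1}+X_{1,n}$ together with the fact that $U\le V$ a.s.\ and $U\overset{d}{=}V$ force $U=V$ a.s. But truncation plus invariance does not ``yield'' the bound: the sentence ``analysing the increment process together with this invariance yields $\E\,\underline{X}\ge\gamma$'' is a placeholder for the single hardest idea in the theorem, namely Liggett's filling (stopping-time) construction. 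One must fix $\eps>0$ and a horizon $L$, use invariance to know that a.s.\ from \emph{every} starting index $i$ there are lengths $\sigma$ with $X_{i,i+\sigma}\le(\underline{X}\vee(-M)+\eps)\sigma$, greedily decompose $\{0,\dots,n\}$ into such intervals of length at most $L$ together with exceptional singletons $\{i\}$ bounded via $X_{i,i+1}$, apply S1 along the decomposition, take expectations using S2, and let $n\to\infty$, then $L\to\infty$ (dominated convergence controls the density of exceptional indices), then $\eps\to 0$, obtaining $\gamma_M\le\E[\underline{X}\vee(-M)]$; finally $M\to\infty$ with monotone convergence (legitimate since $\underline{X}\le\overline{X}\le\E[X_{0,1}\mid\mathcal{I}_1]$, which is integrable) gives $\gamma\le\E\,\underline{X}$. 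Without this construction the left-hand link of your chain $\gamma\le\E\,\underline{X}\le\E\,\overline{X}\le\gamma$ is unsupported, and neither the a.s.\ existence of the limit nor $\E X=\gamma$ follows; so as written the proposal is an accurate roadmap of the standard proof with its engine missing.
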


\begin{proof}[Proof of Theorem~\ref{thm_expectation_poisson}]

We show that Conditions S1), S2) and S3) of Theorem~\ref{thm_kingman} hold for the family of random variables $X_{m,n}:=-L^k(n,m)$, $n,m \in \N$, see Definition~\ref{def_mkpi}. Lemma~\ref{lemma_concatenate} shows that
\[
L^k(a,c) \geq L^k(a,b) + L^k(b,c)
\]
for every $0 \leq a< b< c$, showing the validity of S1). The invariance property S2) follows from Lemma~\ref{lemma_transfo}. Finally, S3) is implied by Lemma~\ref{lemma_expsmall}.

Therefore, we may apply Theorem~\ref{thm_kingman} to obtain that $\E L^k(0,n) \approx \alpha_k n$ with some positive constant $\beta_k$. Moreover, $(L^k(in, (i+1)n)_{i=1}^\infty$ is a sequence of independent, identically distributed random variables, hence it is ergodic. Therefore, $n^{-1}L^k(0,n)$ converges to $\alpha_k $ almost surely.
\end{proof}

Theorem~\ref{thm_expectation_poisson} and Proposition~\ref{poissonuniform} implies that
\[
 \frac {L^k_n}{n^{1/(k+1)}} \rightarrow \alpha_k
\]
almost surely, proving Theorem~\ref{thm_expectation}.

Next, we derive a lower bound on the constant $\alpha_k$ of \eqref{elk0nas}.

\begin{proof}[Proof of Proposition~\ref{prop_lowerbound}]
We are going to prove the statement in the Poisson model by showing that for sufficiently large $n$,
\[
\E L^k(0 ,n ) \geq \frac {n }{6}
\]
holds.

Set $a_i = 3i$ for every $i \in [0, \lfloor n / 3 \rfloor ]$.
By \eqref{eq_Area}, the area of $C_k(a_i, a_{i+1})$ (see Definition~\ref{def_ckab}) is
\[
A(C_k(a_i, a_{i+1}))= \frac {3^k}{k 2^{k-1}} >1.
\]
Since the number of points of $\Pi$ in $C_k(a_i, a_{i+1})$ has Poisson distribution with parameter $A(C_k(a_i, a_{i+1}))$,
\[
\P(|\Pi \cap C_k(a_i, a_{i+1})| =0) = e^{-A(C_k(a_i, a_{i+1})) } < \frac 1 e \,.
\]
Let $Y$ be the number of cells of the form $C_k(a_i, a_{i+1})$ in which $\Pi$ has at least one point. Then $Y \sim B( \lfloor \frac n 3 \rfloor, p)$ with $p > 1 - 1/e > 1/2$. Let $\lambda := \E Y$, then $\lambda > n (1 - 1/e)/3$. By a standard Chernoff-type bound for binomial random variables (see Theorem A.1.12 of \cite{AS00}),
\[
\P\left(Y \leq \lambda- c\sqrt{\lambda\log \lambda} \right) < \lambda^{-c^2/2}.
\]
Therefore, for sufficiently large $n$, 
\begin{equation}\label{pyn6}
\P( Y > n /6) \approx 1.
\end{equation}

Let us now take a point $p$ of $\Pi$ in each of the non-empty cells, and let $\mathcal{S} = \{s_1, \ldots, s_Y\}$ be the collection of these points ordered with respect to their $x$-coordinates. By the construction,
\[
\left( \gamma_k(0)^{\circ k}, s_1, \gamma_k(a_1)^{\circ k}, s_2 , \ldots, \gamma_k(a_{\lfloor n / 3 \rfloor})^{\circ k}, \gamma_k(n)^{\circ k} \right)
\]
is a $k$-monotone chain, where each $s_i$ is placed in its corresponding interval so that we obtain a chain.
By repeatedly applying Lemma~\ref{lemma_concatenate}, we deduce that 
\[
\left( \gamma_k(0)^{\circ k}, s_1, s_2, \ldots, s_Y, \gamma_k(n)^{\circ k} \right)
\]
is also a $k$-monotone chain. Therefore, $L^k(0,n) \geq Y$. The proof is finished by referring to~\eqref{pyn6}, which shows that $\E L^k (0,n) \geq n/6$.
\end{proof}

We finish this section by establishing the exponential concentration estimate for $L^k_n$. Theorem~\ref{thm_conc} is straightforward consequence of Talagrand's strong concentration inequality.

\begin{theorem}[Talagrand \cite{T96}]\label{thm_talagrand}
Suppose $Y$ is a real-valued random variable on a product
probability space $\Omega^{\otimes n}$, and that $Y$ is 1-Lipschitz
with respect to the Hamming distance, meaning that
$$ | Y(x)-Y(y)| \leq 1$$
whenever $x$ and $y$ differ in one coordinates. Moreover assume that
$Y$ is {\sl $f$-certifiable}. This means that there exists a
function $f: \mathbb{N} \rightarrow \mathbb{N}$ with the following
property: for every $x$ and $b$ with $Y(x) \geq b$ there exists an
index set $I$ of at most $f(b)$ elements, such that $Y(y) \geq b$
holds for every $y$ agreeing with $x$ on $I$. Let $m$ denote the
median of $Y$. Then for every $s>0$ we have
\\
$$ \PP (Y \leq m-s)\leq 2 \, \mathrm{exp}\left(\frac{-s^2}{4f(m) } \right)$$
and
$$ \PP (Y \geq m+s)\leq 2 \, \mathrm{exp}\left(\frac{-s^2}{4f(m+s) } \right).$$
\end{theorem}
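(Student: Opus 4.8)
The statement to prove is Talagrand's convex-distance concentration inequality, and the plan is to derive it from the abstract \emph{convex distance inequality}, which I would establish from scratch by induction on the number of coordinates. First I would introduce Talagrand's convex distance: for a measurable set $A \subseteq \Omega^{\otimes n}$ and a point $x = (x_1, \ldots, x_n)$, put
\[
d_T(x,A) = \sup_{\alpha \geq 0, \, \|\alpha\|_2 \leq 1} \ \inf_{y \in A} \sum_{i \,:\, x_i \neq y_i} \alpha_i .
\]
The key preliminary step is to reformulate $d_T(x,A)$, via a minimax (LP-duality) argument, as the Euclidean distance from the origin to the convex hull of the ``disagreement set'' $U_A(x) = \{ (\mathbf{1}[x_i \neq y_i])_{i=1}^n : y \in A \} \subseteq \{0,1\}^n$. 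This geometric description of $d_T$ is exactly what makes the subsequent induction tractable.

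The heart of the argument is the inequality $\EE\, e^{d_T(X,A)^2 / 4} \leq 1 / \PP(A)$, which I would prove by induction on $n$. The case $n = 1$ is immediate. For the inductive step I split off the last coordinate, writing $\Omega^{\otimes n} = \Omega^{\otimes (n-1)} \times \Omega$, and introduce the section $A_\omega = \{ z : (z,\omega) \in A \}$ and the projection $B = \{ z : (z,\omega') \in A \text{ for some } \omega' \}$. Taking, for each $\lambda \in [0,1]$, the convex combination of the hull-minimizers for $A_\omega$ and $B$ and appending the entry $1-\lambda$ in the new direction yields, via the convex-hull description, the recursion
\[
d_T((z,\omega), A)^2 \leq (1-\lambda)^2 + \lambda\, d_T(z, A_\omega)^2 + (1-\lambda)\, d_T(z, B)^2 .
\]
H\"older's inequality with exponents $1/\lambda$ and $1/(1-\lambda)$, together with the inductive hypothesis applied to $A_\omega$ and $B$, reduces the step (after optimizing over $\lambda$) to the elementary real-variable inequality $\inf_{0 \leq \lambda \leq 1} e^{(1-\lambda)^2/4}\, r^{-\lambda} \leq 2 - r$ for $0 \leq r \leq 1$, where $r = \PP(A_\omega)/\PP(B) \leq 1$. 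Integrating over $\omega$ by Fubini and using the numerical bound $q(2-q) \leq 1$ with $q = \PP(A)/\PP(B)$ then closes the induction.

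Having the convex distance inequality in hand, Markov's inequality immediately gives the tail form $\PP(A)\,\PP\!\left( d_T(\cdot, A) \geq t \right) \leq e^{-t^2/4}$ for every $t > 0$. To translate this into the stated hypotheses I would prove the bridging lemma: if $Y$ is $1$-Lipschitz and $f$-certifiable, $a < c$, and $A = \{ Y \leq a \}$, then $Y(x) \geq c$ forces $d_T(x,A) \geq (c-a)/\sqrt{f(c)}$. Indeed, $f$-certifiability supplies an index set $I$ with $|I| \leq f(c)$ such that every point agreeing with $x$ on $I$ has $Y \geq c$; for any $y \in A$, repairing $y$ to agree with $x$ on $I$ and invoking $1$-Lipschitzness shows that $y$ differs from $x$ on at least $c - a$ coordinates inside $I$, so the uniform weight supported on $I$ witnesses the claimed lower bound on $d_T$. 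Applying the tail bound with $A = \{ Y \leq m - s \}$ and $c = m$, using $\PP(Y \geq m) \geq 1/2$, produces the lower-tail estimate; taking $A = \{ Y \leq m \}$ and $c = m + s$, using $\PP(Y \leq m) \geq 1/2$, produces the upper-tail estimate, the factor $2$ arising from the median bound $1/2$.

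The main obstacle is the inductive proof of the convex distance inequality: setting up the geometric recursion with the correct coefficients and verifying the elementary inequality $\inf_{\lambda} e^{(1-\lambda)^2/4}\, r^{-\lambda} \leq 2 - r$, which is essentially tight as $r \to 1$ and is where all the subtlety of Talagrand's method is concentrated. Everything downstream — the Markov step and the certifiability translation — is routine once this core inequality is secured.
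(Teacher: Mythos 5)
The paper does not prove this statement at all: it is imported verbatim as an external result, cited to Talagrand \cite{T96}, and then merely applied (with certificate function $f(b)=b$) to deduce Theorem~\ref{thm_conc}. Your proposal therefore cannot be compared against an internal argument; what it does is reconstruct, correctly, the canonical proof from the cited literature (Talagrand's original argument, as presented e.g.\ in Alon--Spencer): the convex-hull reformulation of $d_T$, the induction giving $\EE\, e^{d_T(X,A)^2/4}\leq 1/\PP(A)$ via the recursion $d_T((z,\omega),A)^2\leq(1-\lambda)^2+\lambda\, d_T(z,A_\omega)^2+(1-\lambda)\,d_T(z,B)^2$, H\"older, and the calculus lemma $\inf_{\lambda\in[0,1]}e^{(1-\lambda)^2/4}r^{-\lambda}\leq 2-r$, followed by the standard certifiability bridge $d_T(x,\{Y\leq a\})\geq(c-a)/\sqrt{f(c)}$ and the two median applications producing exactly the stated tails with the factor $2$. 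All steps check out, including the constants. Two small points to make explicit in a full write-up: the case of an empty section $A_\omega$ must be handled separately (take $\lambda=0$, using $e^{1/4}\leq 2$, which your formulation covers only via the convention $r^{-0}=1$ at $r=0$), and in the bridging lemma one should note that repairing $y\in A$ on $I$ changes at most $\abs{\{i\in I: x_i\neq y_i\}}$ coordinates, so $1$-Lipschitzness yields at least $c-a$ disagreements inside $I$ before the uniform weight vector $\alpha_i=1/\sqrt{\abs{I}}$ is invoked. Neither is a gap in the approach; your outline is sound and is, in substance, the proof the paper delegates to its reference.
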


The conditions of Theorem~\ref{thm_talagrand} are clearly satisfied by the random variable $L^k_n$ with the certificate function $f(b) = b$, by fixing the points of the longest $k$-monotone chain in  $X_n$. Since $L^k_n \leq n$, exponential concentration ensures that the mean and the median are within a distance of $O(n^{1/2(k+1)})$ of each other. Thus, in the above estimates, $m \approx \alpha_k n^{1/(k+1)}$, and  setting $ s = \eps n^{1/2(k+1)}$, we obtain Theorem~\ref{thm_conc}.

The same proof yields the analogous concentration estimate for $L^k(0,n)$:

\begin{theorem} \label{thm_conc_poisson}
For every $k\geq 1$, and for every $\eps>0$,
\[
\PP\left(|L^k(0,n) - \EE L^k(0,n)| > \eps \sqrt{n} \right) \leq 5 e^{- \eps^2/5 \alpha_k}.
\]
holds for every sufficiently large $n$.
\end{theorem}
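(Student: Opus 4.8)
The plan is to reproduce verbatim the derivation of Theorem~\ref{thm_conc} from Talagrand's inequality (Theorem~\ref{thm_talagrand}), now applied to $L^k(0,n)$ in place of $L^k_n$. The only genuinely new difficulty is that the Poisson model does not present $L^k(0,n)$ as a function of a fixed number of i.i.d.\ coordinates, so the first task is to realize it on a bona fide product probability space. To this end I would condition on the total count $N := N^k_\Pi(0,n)$: given $N = N_0$, the configuration $\Pi \cap C_k(0,n)$ is distributed as $N_0$ i.i.d.\ uniform points in $C_k(0,n)$ (see Definition~\ref{def_mkpi}), so $L^k(0,n)$ becomes a function on the product space $C_k(0,n)^{\otimes N_0}$. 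Alternatively, one may partition $C_k(0,n)$ into $M \to \infty$ infinitesimal cells, each an independent coordinate recording emptiness or a point location; this avoids conditioning altogether, since the Talagrand constant is insensitive to the number of coordinates.

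On this product space the two hypotheses of Theorem~\ref{thm_talagrand} hold exactly as in the uniform case. Moving, inserting, or deleting a single point changes the length of the longest $k$-monotone chain by at most $1$, so $L^k(0,n)$ is $1$-Lipschitz for the Hamming distance. Moreover, any configuration attaining $L^k(0,n) \geq b$ contains a $k$-monotone chain of length $b$ (by the transitivity of Lemma~\ref{lemma_transitive} a sub-chain of an admissible chain is again admissible), and fixing those $b$ random points certifies the value; hence $L^k(0,n)$ is $f$-certifiable with $f(b) = b$.

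Talagrand's bound then yields, around the median $m$,
\[
\PP\bigl(L^k(0,n) \leq m - s\bigr) \leq 2\, e^{-s^2/(4m)}, \qquad \PP\bigl(L^k(0,n) \geq m + s\bigr) \leq 2\, e^{-s^2/(4(m+s))}.
\]
By Theorem~\ref{thm_expectation_poisson}, $\EE L^k(0,n) = \alpha_k n\,(1 + o(1))$, and the displayed inequalities force the mean and median to lie within $O(\sqrt n)$ of each other, so $m = \alpha_k n\,(1 + o(1))$ as well. Taking $s = \eps \sqrt n$, the exponent $s^2/(4(m+s))$ tends to $\eps^2/(4\alpha_k)$; enlarging the denominator $4\alpha_k$ to $5\alpha_k$ and the prefactor $2+2$ to $5$ leaves exactly enough slack to absorb the $o(1)$ corrections, the median--mean shift, and the fluctuation of $N$, for all sufficiently large $n$. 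This reproduces the stated estimate, precisely as the choice $s = \eps\, n^{1/(2(k+1))}$ did for Theorem~\ref{thm_conc}.

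The main obstacle is the Poissonization itself. Unlike the uniform model, the number of coordinates is random, with fluctuations of order $\sqrt\lambda = O(n^{(k+1)/2})$, where $\lambda = n^{k+1}/(k 2^{k-1})$, so I must check that the conditional median $m_{N_0}$ is stable as $N_0$ ranges over its typical values. This is where the problem's scaling is essential: by Lemma~\ref{lemma_transfo} the law of the longest chain among $N_0$ uniform points in $C_k(0,n)$ does not depend on $n$, and choosing $n' = (k 2^{k-1} N_0)^{1/(k+1)}$ (at which $N_0$ is the mean count) links it to $\EE L^k(0,n') \approx \alpha_k n'$ via Theorem~\ref{thm_expectation_poisson}, giving $m_{N_0} \approx \alpha_k (k 2^{k-1} N_0)^{1/(k+1)}$. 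Thus $m_{N_0}$ depends on $N_0$ only sublinearly, and a fluctuation $O(n^{(k+1)/2})$ in $N_0$ perturbs the median by merely $O(n^{(1-k)/2}) = o(\sqrt n)$, hence harmlessly. The remaining work is purely bookkeeping: verifying that the deliberately loose constants $5$ and $5\alpha_k$ genuinely dominate the median--mean gap, which lives at the same $\sqrt n$ scale as the deviation being estimated.
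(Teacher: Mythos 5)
Your proposal is correct and follows essentially the same route as the paper, whose entire proof of this statement is the remark that the Talagrand argument for Theorem~\ref{thm_conc} (1-Lipschitz in the Hamming metric, $f$-certifiable with $f(b)=b$, median within $O(\sqrt{n})$ of the mean, $s=\eps\sqrt{n}$) carries over verbatim to $L^k(0,n)$. Your additional care in realizing the Poisson configuration on a genuine product space and checking that the conditional median is stable under the $O(n^{(k+1)/2})$ fluctuation of the point count is a worthwhile detail that the paper silently glosses over, and your estimate that this perturbs the median by only $O(n^{(1-k)/2})=o(\sqrt{n})$ is accurate.
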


Summarizing the results proved in this section, we showed that $L^k_n$ is a random variable exponentially concentrated in a neighbourhood of radius $\O(n^{1/2(k+1)})$ around its mean, which converges to $\alpha_k n^{1/(k+1)}$.

\end{document}